\documentclass[reqno]{amsart}
\usepackage{color}
\usepackage{amssymb,amsmath,amsthm,amstext,amsfonts,amscd}
\usepackage[dvips]{graphicx}
\usepackage{psfrag}
\usepackage{url}

\pagestyle{plain} \pagenumbering{arabic}

\makeatletter \@addtoreset{equation}{section} \makeatother

\renewcommand\thetable{\thesection.\@arabic\c@table}

\theoremstyle{plain}

\newtheorem{theorem}{Theorem}[section]

\newtheorem{lemma}{Lemma}[section]
\newtheorem{corollary}{Corollary}[section]
\newtheorem{definition}{Definition}[section]
\newtheorem{remark}{Remark}[section]

\newtheorem{claim}{Claim}[section]
\newtheorem{assumption}{Assumption}[section]

\newcommand{\supp}{\operatorname{supp}}
\newcommand{\diam}{\operatorname{diam}}


\begin{document}

\title{The approximation of Lyapunov exponents by horseshoes for $C^1$-diffeomorphisms with dominated splitting}

\author{Juan Wang}
\address{School of mathematics, physics and statistics, Shanghai University of Engineering Science, Shanghai 201620, P.R. China}
\email{wangjuanmath@sues.edu.cn}

\author{Rui Zou}
\address{School of Mathematics and Statistics, Nanjing University of Information Science and Technology, Nanjing 210044,  P.R. China}
\email{mathzourui@gmail.com }

\author{Yongluo Cao}
\address{Departament of Mathematics, Soochow University,
Suzhou 215006, Jiangsu, P.R. China}
\address{Departament of Mathematics, Shanghai Key Laboratory of PMMP, East China Normal University,
 Shanghai 200062, P.R. China}
\email{ylcao@suda.edu.cn}

\thanks{The first author is partially supported by NSFC (11501400, 11871361) and the Talent Program of Shanghai University of Engineering Science. The third author is partially supported by NSFC (11771317, 11790274).}

\date{\today}

\begin{abstract}
Let $f$ be a $C^1$-diffeomorphism and $\mu$ be  a hyperbolic ergodic $f$-invariant Borel probability measure with positive measure-theoretic entropy.
 Assume that the  Oseledec splitting
$$T_xM=E_1(x) \oplus\cdots\oplus E_s(x) \oplus E_{s+1}(x) \oplus\cdots\oplus E_l(x) $$
is dominated on the Oseledec basin $\Gamma$.  We give extensions of Katok's Horseshoes construction. Moreover there is  a dominated splitting corresponding to Oseledec subspace on horseshoes.
\end{abstract}

\keywords{hyperbolic measure, dominated splitting, hyperbolic set.}

\footnotetext{2010 {\it Mathematics Subject classification}:
 }

\maketitle


\section{Introduction}
Let $f$ be a $C^r$ $(r\geq1)$ diffeomorphism of a compact Riemannian manifold $M$. An $f$-invariant subset $\Lambda\subset M$ is called a  \emph{hyperbolic set} if there exists a continuous splitting of the tangent bundle $T_\Lambda M = E^{s}\oplus E^{u}$, and constants $c > 0,\ 0 < \tau < 1$ such that for every $x \in \Lambda$,
 \begin{enumerate}
\item[(1)] $d_xf(E^s(x)) = E^s(f(x)),\ d_xf(E^u(x)) = E^u(f(x))$;
\item[(2)] for all $n \geq 0,\ \|d_xf^n(v)\|\leq c\tau ^n\|v\|$ if $v \in E^s(x)$, and $\|d_xf^{-n}(v)\|\leq c\tau^n\|v\|$ if $v \in E^u(x)$.
\end{enumerate}
A hyperbolic set $\Lambda$ is called \emph{locally maximal}, if there exists a neighbourhood $U$ of $\Lambda$
such that $\Lambda=\bigcap_{n\in\mathbb{Z}}f^n(U)$. Let $\mathcal{M}_f(\Lambda)$ be the space of all $f$-invariant Borel probability measures on $\Lambda$.
Let $\mu$ be a hyperbolic ergodic $f$-invariant Borel probability measure on $M$. We say $\mu$ \emph{hyperbolic} if it possesses at least one negative and one positive, and no zero Lyapunov exponents. Let $\Gamma$ be the Oseledec's basin of $\mu$ (see Theorem \ref{oseledectheo}). For $x\in\Gamma$, denote its distinct Lyapunov exponents by
$$\lambda_1(\mu)<\cdots<\lambda_s(\mu)<0<\lambda_{s+1}(\mu)<\cdots<\lambda_l(\mu)$$
with multiplicities $n_1, n_2, \cdots, n_l\geq1$ and let
$$T_xM=E_1(x)\oplus\cdots\oplus E_s(x)\oplus E_{s+1}(x)\oplus\cdots\oplus E_l(x)$$
be the corresponding decomposition of its tangent space. Denote $E^s=E_1\oplus\cdots\oplus E_s$ and $E^u=E_{s+1}\oplus\cdots\oplus E_l$.
In this paper, we consider $r=1$ and the following assumption.
\begin{assumption}\label{assumpdominated}
The splitting
$$T_{\Gamma} M=E_1\oplus\cdots\oplus E_s\oplus E_{s+1}\oplus\cdots\oplus E_l$$
is dominated.
\end{assumption}
We state the main result of this paper:

\begin{theorem}\label{maintheorem}
Let $f: M\rightarrow M$ be a $C^1$ diffeomorphism of a compact Riemannian manifold $M$ and $\mu$ be a hyperbolic ergodic $f$-invariant Borel probability measure on $M$ with positive measure-theoretic entropy $h_\mu(f)>0$.
Under Assumption \ref{assumpdominated},
we have for every small $\varepsilon>0$, there exists a compact set $\Lambda^*\subseteq M$ and a positive integer $m$ satisfying
\begin{itemize}
\item[(i)] $\Lambda^*$ is a locally maximal hyperbolic set and topologically mixing with respect to $f^m$.
\item[(ii)] $|h_{top}(f|_\Lambda)-h_\mu(f)|<\varepsilon$ where $\Lambda=\Lambda^*\cup f(\Lambda^*)\cup\cdots\cup f^{m-1}(\Lambda^*)$.
\item[(iii)] $\Lambda$ is contained in the $\varepsilon$-neighborhood of the support of $\mu$.
\item[(iv)] $d(\nu,\mu)<\varepsilon$ for every $\nu\in\mathcal{M}_f(\Lambda)$, where $d$ is a metric that generates the weak* topology.
\item[(v)] There is a dominated splitting $T_{\Lambda} M=\widetilde{E}_1\oplus_< \widetilde{E}_2\oplus_< \cdots\oplus_< \widetilde{E}_l$ on $\Lambda$ with $\dim \widetilde{E}_i=n_i$, and
\begin{equation}\label{lyexloup}
e^{[\lambda_{i}(\mu)-6\varepsilon]km}\|u\|\leq \|d_xf^{km}(u)\|\leq e^{[\lambda_{i}(\mu)+6\varepsilon]km}\|u\|
\end{equation}
for every $x\in\Lambda$, $k\geq 1$ and $0\neq u\in \widetilde{E}_i(x)$, $i=1, \cdots, l$.
\end{itemize}
\end{theorem}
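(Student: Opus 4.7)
The plan is to follow the general scheme of Katok's horseshoe construction while replacing the $C^{1+\alpha}$ hypothesis with the dominated splitting Assumption \ref{assumpdominated}. The dominated splitting provides invariant cone fields that play the role usually played by Pesin's Hölder charts, so the classical shadowing/Markov construction goes through with care.

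First, I would extract uniform Pesin-like blocks. By Egorov's theorem applied to the Oseledec convergence on $\Gamma$, for each small $\varepsilon>0$ there is a compact set $\Gamma_\ell\subset\Gamma$ of measure $\mu(\Gamma_\ell)>1-\varepsilon$ on which the angles between the Oseledec subspaces are bounded away from zero, the decomposition $x\mapsto E_i(x)$ is continuous, and the finite-time Lyapunov exponents along each $E_i(x)$ lie within $\varepsilon$ of $\lambda_i(\mu)$ for $n\ge N_0$. Combining this with the domination of $E_1\oplus\cdots\oplus E_i$ over $E_{i+1}\oplus\cdots\oplus E_l$, one obtains, on a neighborhood $U$ of $\Gamma_\ell$, a nested family of continuous cone fields $\cC^{\,u}_i$, $1\le i\le l-1$, invariant under $df^{N_0}$ in the usual sense of domination, with each cone field having the Oseledec subspace of appropriate dimension in its interior.

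Next, I would run the Katok counting argument. Using Brin--Katok (or an entropy-via-separated-sets estimate), together with ergodicity and the uniform recurrence of $\mu$-typical points to $\Gamma_\ell$, I can select a large integer $n$ and an $(n,\delta)$-separated set $S\subset\Gamma_\ell$ with $\#S\ge e^{n(h_\mu(f)-\varepsilon)}$ whose orbits spend a definite fraction of time in $\Gamma_\ell$ and whose endpoints return $\delta$-close to the starting point; a Pliss-type lemma ensures a positive density of such "good" returns with uniform hyperbolicity estimates. The finite-time hyperbolicity inherited from $\Gamma_\ell$, together with the invariance of the cone fields built above, allows a shadowing/graph-transform argument (performed entirely with cones, since no Hölder estimates are available): for sufficiently large $n$ the orbit segments of points in $S$ can be freely concatenated, and the closed invariant set $\Lambda^*$ of all admissible concatenations for $f^m$, where $m=n$, is topologically conjugate to a full shift on $\#S$ symbols. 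This yields a locally maximal, topologically mixing hyperbolic set for $f^m$, proving (i). Item (ii) follows from $\log\#S\approx n\,h_\mu(f)$ and $h_{\topp}(f|_\Lambda)=\frac{1}{m}h_{\topp}(f^m|_{\Lambda^*})$. Items (iii) and (iv) follow because every orbit in $\Lambda$ shadows an orbit of a point of $S\subset\Gamma_\ell$ that equidistributes according to $\mu$, so Birkhoff averages of continuous functions along $\Lambda$-orbits are $\varepsilon$-close to those of $\mu$.

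For (v), which is the main novelty beyond the classical Katok theorem, I would exploit that the shadowing has been done inside the nested cone fields $\cC^{\,u}_i$. Taking, for each $x\in\Lambda$, the intersection over forward (resp.\ backward) iterates of the images of these cones produces the invariant splitting $\widetilde{E}_1\oplus\cdots\oplus\widetilde{E}_l$ with $\dim\widetilde{E}_i=n_i$; continuity and domination are automatic because the cone fields are continuous and strictly invariant. The exponential bounds in (\ref{lyexloup}) come from concatenating the finite-time estimates valid on $\Gamma_\ell$: each length-$m$ piece of an orbit in $\Lambda$ shadows a segment of a $\Gamma_\ell$-orbit on which $\|df^m|_{E_i}\|$ is within $e^{\pm\varepsilon m}$ of $e^{\lambda_i(\mu)m}$, and the loss in passing from $E_i$ to $\widetilde{E}_i$ via cones contributes at most another factor $e^{\pm\varepsilon m}$ at each step, giving the claimed bound with the $6\varepsilon$ slack.

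The hard part, and the step that really uses Assumption \ref{assumpdominated}, is the shadowing/graph-transform argument in a purely $C^1$ setting: without Hölder continuity one cannot use Pesin's stable manifold theorem, so the hyperbolic structure on $\Lambda^*$ must be built directly from the cones, and the conclusion that the resulting intersections $\widetilde{E}_i$ are honest continuous subbundles of the correct dimension (rather than merely invariant cones) requires the strict domination between consecutive Oseledec blocks to survive the shadowing. Everything else is a careful bookkeeping of constants in the Katok template.
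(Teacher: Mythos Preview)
Your outline is correct and matches the paper's overall architecture: Egorov to get uniform blocks, Katok's separated-set counting to produce many returning points, a rectangle/shadowing construction to build $\Lambda^*$ conjugate to a full shift, and then cone intersections to manufacture the refined splitting $\widetilde{E}_1\oplus\cdots\oplus\widetilde{E}_l$ with the Lyapunov bounds. Two points of comparison are worth noting.

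First, the step you flag as ``the hard part'' --- $C^1$ shadowing without H\"older charts --- is not redone in the paper. It is imported wholesale from Gelfert's earlier work: the paper quotes her Lemma on the existence of $(\rho,\beta,\gamma)$-rectangles for $C^1$ diffeomorphisms with a dominated $E^s\oplus_<E^u$ splitting (Lemma~\ref{existofrectangle}), and this is what guarantees that admissible $s$-rectangles map onto admissible $u$-rectangles under $f^m$. So the paper's own contribution is concentrated in item~(v), not in the horseshoe construction itself. Second, for the Lyapunov control the paper does \emph{not} use a Pliss-type selection of hyperbolic times. Instead it uses Lemma~8.4 of Abdenur--Bonatti--Crovisier (stated here as Lemmas~\ref{malyexoffN}--\ref{milyexoffN}): for large $N$ the Birkhoff averages of $\log\|df^N|_{E_i}\|$ and $\log m(df^N|_{E_i})$ converge to numbers within $\varepsilon$ of $\lambda_i(\mu)$. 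Egorov on these averages gives uniform product estimates over blocks of length $L_0$, and the paper then carefully chains these together (Lemma~\ref{invariantofcone}, Corollary~\ref{corofle}) along shadowed orbits, splitting each return time $m$ into pieces of length $L_0$ plus remainders controlled by explicit constants $Q,Q_1,Q_2$. This bookkeeping is where the $6\varepsilon$ comes from. Finally, the paper includes a short but necessary argument (Lemma~\ref{ds}) that the cone intersections $\widetilde F_j,\widetilde G_j$ are genuine linear subspaces of the right dimension and give a dominated splitting on $\Lambda$ --- a point you mention but should not underestimate, since a priori a nested intersection of cones need not be a subspace.
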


This paper is motived by Katok \cite{Ka80}, Avila, Crovisier, Wilkinson \cite{ACW17}  and Cao, Pesin, Zhao \cite{cpz17}.
Katok \cite{Ka80} proved for a $C^2$ diffeomorphism $f$ preserving an ergodic hyperbolic measure with positive entropy, there exists  a sequence of horseshoes, and the topology entropy of $f$ restricted to horseshoes can be arbitrarily close to the measure-theoretic entropy, implying an abundance of hyperbolic periodic points.
Mendoza \cite{Me88}  proved that an ergodic hyperbolic SRB measure of $C^2$ surface diffeomorphism can be approximated by a sequence of measures supported on horseshoes and the Hausdorff dimension for horseshoes on the unstable manifold converges to $1$.
 Katok and Mendoza also elaborated on the related results in the part of supplement of \cite{KH95}.  Avila, Crovisier, Wilkinson \cite{ACW17} explicitly gave a dominated  splitting $T_\Lambda M=E_1\oplus_<\cdots\oplus_<E_l$ on each horseshoe $\Lambda$ and the Lyapunov exponential approximation in each subbundle $E_i$ is obtained over $\Lambda$, for $i=1,2,\cdots,l$.   For $C^r (r>1)$ maps, results related to Katok's approximation were obtained by Chung \cite{Chung}, Gelfert \cite{Gelfert10} and Yang \cite{Yang}.  For every ergodic invariant measure $\mu$  with positive  entropy for  $C^{1+\alpha}$ nonconformal repellers,  Cao, Pesin, Zhao \cite{cpz17} constructed a compact  expanding invariant set with  dominated splitting corresponding to Oseledec splitting of $\mu$, and for which  entropy and Lyapunov exponents approximate to entropy and Lyapunov exponents for $\mu$. Then  Cao, Pesin, Zhao\cite{cpz17}  used this construction  to show the continuity of sub-additive topological pressure and give a sharp estimate for the lower bound estimate of Hausdorff dimension of non-conformal repellers.
Lian and Young extended the results of Katok \cite{Ka80} to mappings of Hilbert spaces \cite{ly11} and to semi-flows on Hilbert spaces \cite{ly12}.    In the case of  Banach quasi-compact cocycles for $C^r (r>1)$ diffeomorphism, for an  ergodic hyperbolic measure with positive entropy, Zou and Cao \cite{zc17}  constructed a Horseshoe with dominated splitting corresponding to Oseledec splitting of $\mu$   for cocycles, for which there are entropy and Lyapunov exponents's  approximations.

Gelfert \cite{Gelfert16} relaxed the smoothness to $C^1$. Gelfert's results assert the following: let $f$ be a $C^1$ diffeomorphism of a smooth Riemannian manifold, and let $\mu$ be an ergodic hyperbolic $f$-invariant Borel probability measure whose support admits a dominated splitting $T_{\supp\mu}M=E^s\oplus_<E^u$. Assume that $(f,\mu)$ has positive measure-theoretic entropy. Then she proved the analogous results of Katok \cite{Ka80}. In this paper we assume the splitting
$$T_{\Gamma} M=E_1\oplus\cdots\oplus E_s\oplus E_{s+1}\oplus\cdots\oplus E_l$$
is dominated. We also prove the existence of locally maximal hyperbolic sets (horseshoes) in a neighborhood of the support set of $\mu$  by using Katok's technique. Moreover we use some properties of the dominated splitting to show the invariance of the cones. Then we obtain a dominated splitting $T_\Lambda M=\widetilde{E}_1\oplus_<\cdots\oplus_<\widetilde{E}_l$ on each horseshoe $\Lambda$.
We also use the properties of $C^1$ nonuniform hyperbolic dynamical systems to prove the approximation of Lyapunov exponents on each subbundle $\widetilde{E}_i$ over $\Lambda$ (see (\ref{lyexloup})). We didn't use the Lyapunov charts and Lyapunov norm in the proof.

The remainder of this paper is organized as follows. In Section 2, we recall  some related notations, properties and theorems. Section 3 provides the proof of the main result.

\vskip1cm


\section{Preliminaries}
Let $M$ be a compact Riemannion manifold, and $f$
be a $C^1$ diffeomorphism from $M$ to itself. Let $\dim M$ be the dimension of M. For $x\in M$, we denote
$$\| d_xf\|= \sup_{0\neq u \in T_xM}\displaystyle
\frac{\|d_xf(u)\|}{\|u\|},\ \ m(d_xf) = \inf_{0 \neq u \in
T_xM}\displaystyle \frac{\|d_xf(u)\|}{\|u\|}$$
which are respectively called the maximal norm and minimum norm of the differentiable operator $d_xf : T_xM \to T_{fx}M$, where $\|\cdot\|$ is the norm induced by the Riemannian metric $d$ on $M$.
Let $n$ be a natural number we define a metric $d_n$ on $M$ by $d_n(x,y)=\max_{0\leq i\leq n-1}d(f^ix,f^iy)$.
For any $\rho>0$ and $x\in M$, \emph{an $(n,\rho)$ Bowen ball of $x$} is
$$B_n(x,\rho)=\Big\{y\in M: d_n(x,y) < \rho\Big\}.$$
A subset $E$ of $M$ is said to be \emph{ $(n,\rho)$-separated with respect to $f$ } if $x,y\in E$, $x\neq y$, implies $d_n(x,y)>\rho$.

\subsection{Dominated splitting}
Ma$\tilde{\text{n}}\acute{\text{e}}$, Liao and Pliss introduced independently the concept of dominated splitting in order to prove that structurally stable systems satisfy a hyperbolic condition on the tangent map. We recall the definition and some properties of the dominated splitting(see Appendix B in \cite{BDV}).
\begin{definition}\label{dominated-splitting}
Let $K \subseteq M$ be an $f$-invariant subset. A
$df$-invariant splitting $T_K M = E_1 \oplus E_2 \oplus \cdots \oplus E_k$ of the tangent
bundle over $K$ is \emph{dominated} if there exists $N\in \mathbb{N}$
such that for every $i<j$, every $x\in K$, and each pair of unit vectors $u\in E_i(x)$
and $v\in E_j(x)$, one has
$$\frac{\|d_xf^N(u)\|}{\|d_xf^N(v)\|}\leq\frac{1}{2}$$
and the dimension of $E_i(x)$ is independent of $x\in K$ for every $i\in\{1,\cdots,k\}$. We denote $T_K M = E_1 \oplus_< E_2\oplus_<\cdots\oplus_<E_k$.
\end{definition}

\begin{remark}\label{conds}
\begin{itemize}
\item [ (1) ] We can assume $N=1$ for a smooth change of metric on $M$.
\item [ (2) ] The dominated splitting $T_K M = E_1 \oplus_< E_2\oplus_<\cdots\oplus_<E_k$ can be extended to a dominated splitting $TM = E_1 \oplus_< E_2\oplus_<\cdots\oplus_<E_k$ over the closure of $K$. See \cite{BDV} for a proof.
\item [ (3) ] The dominated splitting $T_K M = E_1 \oplus_< E_2\oplus_<\cdots\oplus_<E_k$ can be extended to a continuous splitting $TM = E_1 \oplus E_2\oplus\cdots\oplus E_k$ in a neighborhood of $K$. See \cite{BDV} for a proof.
\item [ (4) ] The dominated splitting is unique if one fixes the dimensions of the subbundles.
\item [ (5) ] Every dominated splitting is continuous, i.e. the subspaces $E_i(x)$, $i=1$, $2,\cdots, k$, depend continuously on the point $x$.
\end{itemize}
\end{remark}

\subsection{Preliminaries of Lyapunov exponents}
We review the Oseledec's Theorem which contains the definitions of the Oseledec's basin, the Oseledec's splitting, the Lyapunov exponents and the multiplicities.
\begin{theorem}\label{oseledectheo}(\cite{Ose68})
Let $f$ be a $C^1$ diffeomorphism of a compact Riemannian manifold $M$ preserving an ergodic $f$-invariant Borel probability measure $\mu$. Then there exist
\begin{itemize}
\item [(1)] real numbers $\lambda_1(\mu)<\lambda_2(\mu)<\cdots<\lambda_l(\mu)(l\leq \dim M)$;
\item [(2)] positive intergers $n_1, n_2, \cdots, n_l$, satisfying $n_1+\cdots+n_l=\dim M$;
\item [(3)] a Borel set $\Gamma:=\Gamma(\mu)$, called the Oseledec's basin of $\mu$, satisfying $f\big(\Gamma)=\Gamma$ and $\mu\big(\Gamma\big)=1$;
\item [(4)] a measurable splitting, called the Oseledec's splitting, $T_xM=E_1(x)\oplus\cdots \oplus E_l(x)$ with $\dim E_i(x)=n_i$ and $d_xf(E_i(x))=E_i(f(x))$, such that
$$\lim_{n\to\pm\infty}\frac{\log\|d_xf^n(v)\|}{n}=\lambda_i(\mu),$$
for any $x\in\Gamma$, $0\not=v\in E_i(x)$, $i=1,2,\cdots,l$.
\end{itemize}
The real numbers $\lambda_1(\mu),\lambda_2(\mu),\cdots,\lambda_l(\mu)$ are called the \emph{Lyapunov exponents}, and $n_1,n_2,\cdots,n_l$ are called the \emph{multiplicities}.
\end{theorem}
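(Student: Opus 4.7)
The plan is to prove the Multiplicative Ergodic Theorem by the exterior-algebra approach, combining Kingman's subadditive ergodic theorem with a filtration-to-splitting argument via inversion. After a measurable trivialisation of $TM$ on a full-measure set, reduce to a $GL(d,\R)$-cocycle $A_n(x) = d_xf^n$ over the ergodic system $(M,\mu,f)$, with $d := \dim M$. For each $k = 1,\ldots,d$, the function $\varphi^{(k)}_n(x) := \log\|\wedge^k A_n(x)\|$ is subadditive in $n$ (by submultiplicativity of operator norms on exterior powers) and satisfies $\varphi^{(k)}_1 \in L^1(\mu)$ by compactness of $M$ and continuity of $df$. Kingman's theorem together with ergodicity yields $\mu$-a.e. existence of a constant limit $\chi_k := \lim_n \tfrac{1}{n}\varphi^{(k)}_n(x)$; the differences $\chi_k - \chi_{k-1}$ (with $\chi_0 := 0$), rearranged with repetitions collapsed, produce the distinct Lyapunov values $\lambda_1 < \cdots < \lambda_l$ and multiplicities $n_1,\ldots,n_l$ summing to $d$.

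Next I construct the filtration and then the splitting. On the full-measure invariant set $\Gamma_0$ where every $\chi_k$-limit exists, define
$$V^+_i(x) := \Bigl\{ v \in T_xM : \limsup_{n\to\infty} \tfrac{1}{n}\log\|d_xf^n v\| \leq \lambda_i \Bigr\}.$$
These are $df$-invariant and nested, $V^+_1 \subset \cdots \subset V^+_l = T_xM$; a volume-comparison argument using the $\wedge^k$-limits forces $\dim V^+_i = n_1+\cdots+n_i$. Applying the same construction to $f^{-1}$ (which also preserves $\mu$ ergodically) produces a decreasing backward filtration $V^-_l \subset \cdots \subset V^-_1 = T_xM$ on a backward analogue $\Gamma_0^-$. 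Setting
$$E_i(x) := V^+_i(x) \cap V^-_i(x), \qquad \Gamma := \bigcap_{n \in \Z} f^n\bigl(\Gamma_0 \cap \Gamma_0^-\bigr),$$
yields a measurable $df$-invariant splitting $T_xM = E_1(x) \oplus \cdots \oplus E_l(x)$ with $\dim E_i(x) = n_i$, defined on an $f$-invariant full-measure set $\Gamma$ with $\mu(\Gamma) = 1$ and $f(\Gamma) = \Gamma$.

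The main technical obstacle is upgrading the one-sided bound $\limsup_n \tfrac{1}{n}\log\|d_xf^n v\| \leq \lambda_i$ available on $V^+_i$ to the two-sided \emph{pointwise} limit $\lim_{n\to\pm\infty}\tfrac{1}{n}\log\|d_xf^n v\| = \lambda_i$ for every $0 \neq v \in E_i(x)$. The upper estimate for $n\to+\infty$ is immediate from $E_i \subset V^+_i$ and symmetrically for $n\to-\infty$; the lower estimate is the delicate direction. I would handle it by the standard tempering device: apply Kingman once more to the cocycle measuring reciprocals of the minimal angles between the blocks $E_j(f^n x)$ to conclude these angles decay at most subexponentially along $\mu$-typical orbits. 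Combined with the volume identity $\sum_{j \leq i} n_j \lambda_j = \chi_{n_1+\cdots+n_i}$, subexponential tempering of the angles prevents any cancellation in the $\wedge^k$-norms and forces every nonzero vector in $E_i$ to realise the nominal rate $\lambda_i$ on both time directions. This tempering lemma is really the crux of the proof; once it is in hand, the invariance, measurability, and pointwise-exponent statements (1)--(4) follow by bookkeeping.
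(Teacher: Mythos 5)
The paper contains no proof of this theorem: it is the classical Oseledec multiplicative ergodic theorem, stated and cited directly to \cite{Ose68} as a known result, so there is nothing in the paper to compare your argument against. Your sketch is the standard Raghunathan--Ruelle route (Kingman on $\log\|\wedge^k d_xf^n\|$ to extract the exponents and multiplicities, forward and backward Lyapunov filtrations intersected to give the splitting, and a tempering argument to upgrade $\limsup$ to $\lim$), and as a strategy it is correct. One step as phrased would not work, though: the quantity built from reciprocals of the angles between the blocks $E_j(f^nx)$ along an orbit is not a subadditive cocycle, so Kingman's theorem does not apply to it. The device actually needed there is the tempering lemma --- if $\psi$ is measurable and $\psi\circ f-\psi\in L^1(\mu)$, then $\tfrac{1}{n}\psi(f^nx)\to 0$ for $\mu$-a.e.\ $x$ --- which is a consequence of Birkhoff's ergodic theorem, not Kingman's; it is this, together with the $\wedge^k$-volume identities, that pins down the exact exponent on each block $E_i$. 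With that substitution the outline is sound, though the volume-comparison step that fixes $\dim V^+_i$ and the verification that $T_xM=\bigoplus_i E_i(x)$ with $E_i$ measurable and $df$-invariant each require genuine work in a complete proof rather than being pure bookkeeping.
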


In the result to follow, we will review the Lyapunov exponents as a limit of Birkhoff sums in terms of $d_{(\cdot)} f^N$ for natural numbers $N$ large enough, which is computed in \cite{ABC11}.
\begin{lemma}\label{malyexoffN}(Lemma $8.4$ in \cite{ABC11})
Let $f$ be a $C^1$-diffeomorphism, $\mu$ be an ergodic invariant
probability measure, and $E\subseteq T_{\text{supp}(\mu)}M$ be a
$df$-invariant continuous subbundle defined over the support of $\mu$. Let $\lambda_E^+$ be the upper Lyapunov
exponent in $E$ of the measure $\mu$.
Then, for any $\varepsilon>0$, there exists an integer
$N_1(\varepsilon)$ such that, for $\mu$ almost every point $x\in M$
and any $N\geq N_1(\varepsilon)$, the Birkhoff averages
$$\frac{1}{kN}\sum_{l=0}^{k-1}\log\|d_{f^{lN}(x))}f^N|_{E(f^{lN}(x))}\|$$
converge towards a number contained in
$[\lambda_E^+,\lambda_E^++\varepsilon)$, where $k$ goes to
$+\infty$.
\end{lemma}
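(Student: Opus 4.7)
The plan is to combine Kingman's subadditive ergodic theorem applied to $(f,\mu)$ with Birkhoff's ergodic theorem applied to $(f^N,\mu)$, using subadditivity to bracket the limit. Set $\varphi_n(x):=\log\|d_xf^n|_{E(x)}\|$. The chain rule $d_xf^{n+m}=(d_{f^nx}f^m)\circ(d_xf^n)$ combined with submultiplicativity of the operator norm on the invariant subbundle $E$ gives subadditivity $\varphi_{n+m}(x)\leq\varphi_n(x)+\varphi_m(f^nx)$; since $f$ is $C^1$, $M$ compact and $E$ continuous, $\varphi_1$ is bounded, hence each $\varphi_n$ is $\mu$-integrable. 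Kingman's theorem applied to the ergodic system $(f,\mu)$ yields $\varphi_n(x)/n\to\lambda_E^+$ for $\mu$-a.e.\ $x$ and $\tfrac{1}{n}\int\varphi_n\,d\mu\downarrow\lambda_E^+$. Given $\varepsilon>0$, this lets me pick $N_1=N_1(\varepsilon)$ with $\lambda_E^+\leq\tfrac{1}{N}\int\varphi_N\,d\mu<\lambda_E^++\varepsilon/2$ for every $N\geq N_1$.

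Fix $N\geq N_1$. Since $\mu$ is $f^N$-invariant (though possibly not $f^N$-ergodic), Birkhoff applied to $(f^N,\mu)$ and the bounded function $\varphi_N$ yields, for $\mu$-a.e.\ $x$, existence of the limit
\[
\bar\varphi_N(x):=\lim_{k\to\infty}\frac{1}{k}\sum_{l=0}^{k-1}\varphi_N(f^{lN}x)=\mathbb{E}_\mu[\varphi_N\mid\mathcal{I}_{f^N}](x),
\]
which is $f^N$-invariant with $\int\bar\varphi_N\,d\mu=\int\varphi_N\,d\mu$. Dividing the subadditivity bound $\varphi_{kN}(x)\leq\sum_{l=0}^{k-1}\varphi_N(f^{lN}x)$ by $kN$ and letting $k\to\infty$ (using the Kingman pointwise limit on the left), I obtain the desired lower bound $\bar\varphi_N(x)/N\geq\lambda_E^+$ for $\mu$-a.e.\ $x$.

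The main obstacle is the pointwise (rather than merely $L^1$-average) upper bound $\bar\varphi_N(x)/N<\lambda_E^++\varepsilon$, since the $f^N$-ergodic components of $\mu$ may carry different values of $\bar\varphi_N$. The plan is to exploit the $f$-ergodicity of $\mu$, which forces the number of distinct $f^N$-ergodic components to remain bounded by a finite period $p=p(\mu)$: write $\mu=\tfrac{1}{p}\sum_{i=0}^{p-1}\mu_i$ with each $\mu_i$ being $f^p$-ergodic and cyclically permuted by $f$. Since $\mu_i\ll\mu$, Kingman's theorem applied to $(f^p,\mu_i)$ yields $\tfrac{1}{kp}\int\varphi_{kp}\,d\mu_i\downarrow\lambda_E^+$, with the decrease uniform in the finitely many indices $i$. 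For $N=qp+r$ with $q$ large and $0\leq r<p$, submultiplicativity $\varphi_N(x)\leq\varphi_{qp}(x)+\varphi_r(f^{qp}x)$, $f^{qp}$-invariance of each $\mu_i$, and boundedness of $\varphi_r$ together give $\tfrac{1}{N}\int\varphi_N\,d\mu_i<\lambda_E^++\varepsilon$ for every $i$ once $N$ is sufficiently large. Every $f^N$-ergodic component of $\mu$ is an average of some subcollection of the $\mu_i$, so the same bound transfers to $\bar\varphi_N(x)/N$ for $\mu$-a.e.\ $x$. Combined with the lower bound, this establishes convergence of the Birkhoff averages to a value in $[\lambda_E^+,\lambda_E^++\varepsilon)$.
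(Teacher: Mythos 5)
The paper does not give a proof of this lemma; it is quoted verbatim from \cite{ABC11}, so there is no ``paper's own proof'' to compare against. Your reduction, however, contains a genuine gap at the crucial upper-bound step, and I will flag it.

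Your setup through the lower bound is fine: $\varphi_n(x)=\log\|d_xf^n|_{E(x)}\|$ is subadditive and bounded, Kingman gives $\varphi_n/n\to\lambda_E^+$ $\mu$-a.e.\ and $\frac1n\int\varphi_n\,d\mu\downarrow\lambda_E^+$, and dividing the subadditivity inequality by $kN$ does yield $\bar\varphi_N(x)/N\geq\lambda_E^+$ a.e. You also correctly isolate the obstacle: $\bar\varphi_N=\mathbb{E}_\mu[\varphi_N\mid\mathcal{I}_{f^N}]$ is $f^N$-invariant but need not be constant, and the mean bound $\frac1N\int\varphi_N\,d\mu<\lambda_E^++\varepsilon/2$ does not by itself control the pointwise values.

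The gap is in the claim that ``the number of distinct $f^N$-ergodic components [is bounded by] a finite period $p=p(\mu)$.'' This is false for a general $f$-ergodic $\mu$. If $d_N$ denotes the number of $f^N$-ergodic components, one always has $d_N\mid N$, but $d_N=\bigl|G_{\mathrm{tor}}\cap\{\text{$N$-th roots of unity}\}\bigr|$ where $G_{\mathrm{tor}}$ is the torsion part of the $L^2$-eigenvalue group of $(f,\mu)$, and $G_{\mathrm{tor}}$ can be infinite. For instance if the Kronecker factor of $(f,\mu)$ is a dyadic odometer then $d_{2^k}=2^k$ for every $k$, so $\sup_N d_N=\infty$; such measures occur for $C^1$ (indeed $C^\infty$) diffeomorphisms of compact manifolds via Anosov--Katok type constructions. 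Consequently there is no decomposition $\mu=\frac1p\sum_{i=0}^{p-1}\mu_i$ into $f^p$-ergodic pieces that refines every $f^N$-ergodic decomposition simultaneously. A second, related defect: even when $p$ exists, your final sentence asserts that every $f^N$-ergodic component is an average of a subcollection of the $\mu_i$; this requires $d_N\mid p$, which fails whenever $d_N>p$ (and would be automatic only under the very finiteness you are trying to establish). When $p\mid N$ the $f^N$-ergodic partition can be strictly \emph{finer} than the $f^p$-ergodic one, and then the $f^N$-components are subdivisions of the $\mu_i$, not averages, so Kingman's bound on $\int\varphi_N\,d\mu_i$ does not transfer. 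To sum up, the argument implicitly assumes $\sup_N d_N<\infty$ (equivalently, finiteness of the torsion part of the eigenvalue group), which is not part of the hypotheses, so the upper bound $\bar\varphi_N(x)/N<\lambda_E^++\varepsilon$ a.e.\ is not established by this route.
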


The following Lemma is analogous.
\begin{lemma}\label{milyexoffN}
Let $f$ be a $C^1$-diffeomorphism, $\mu$ be an ergodic invariant
probability measure, and $E\subseteq T_{\text{supp}(\mu)}M$ be a
$df$-invariant continuous subbundle defined over $\text{supp}(\mu)$. Let
$\lambda_E^-$ be the lower Lyapunov exponent in $E$ of the
measure $\mu$.
Then, for any $\varepsilon>0$, there exists an integer
$N_2(\varepsilon)$ such that, for $\mu$ almost every point $x\in M$
and any $N\geq N_2(\varepsilon)$, the Birkhoff averages
$$\frac{1}{kN}\sum_{l=0}^{k-1}\log m(d_{f^{lN}(x)}f^N|_{E(f^{lN}(x))})$$
converge towards a number contained in
$(\lambda_E^--\varepsilon,\lambda_E^-]$, where $k$ goes to
$+\infty$.
\end{lemma}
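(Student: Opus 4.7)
The plan is to reduce Lemma \ref{milyexoffN} to Lemma \ref{malyexoffN} applied to the inverse diffeomorphism $f^{-1}$, exploiting the duality $m(A)=\|A^{-1}\|^{-1}$ between minimum and operator norms together with the fact that the lower Lyapunov exponent of $f$ in $E$ equals minus the upper Lyapunov exponent of $f^{-1}$ in $E$. Since $E$ is $df$-invariant and $f$ is a diffeomorphism, the restriction $d_y f^N|_{E(y)}\colon E(y)\to E(f^N y)$ is a linear isomorphism with inverse $d_{f^N y}f^{-N}|_{E(f^N y)}$, so the pointwise identity
$$m(d_y f^N|_{E(y)}) = \|d_{f^N y}f^{-N}|_{E(f^N y)}\|^{-1}$$
holds everywhere on $\supp(\mu)$.

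Since $\mu$ is also $f^{-1}$-invariant and ergodic and $E$ is $df^{-1}$-invariant and continuous over $\supp(\mu)$, I would apply Lemma \ref{malyexoffN} to $f^{-1}$ and $E$: for the given $\varepsilon$ there exists an integer $N_2(\varepsilon)$ such that for $\mu$-a.e.\ $x$ and every $N\geq N_2(\varepsilon)$,
$$\frac{1}{kN}\sum_{l=0}^{k-1}\log\|d_{f^{-lN}(x)}f^{-N}|_{E(f^{-lN}(x))}\|\longrightarrow a\in[-\lambda_E^-,-\lambda_E^-+\varepsilon)$$
as $k\to\infty$. Substituting the duality identity with $y=f^{-(l+1)N}(x)$ and shifting the summation index (the two boundary terms contribute $O(1/k)$), this translates into convergence of the \emph{backward} $f^N$-Birkhoff averages of $\psi(z):=\log m(d_z f^N|_{E(z)})$ to $-a\in(\lambda_E^--\varepsilon,\lambda_E^-]$. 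Because $f$ is $C^1$ on the compact manifold $M$, the function $\psi$ is bounded and hence in $L^1(\mu)$; Birkhoff's ergodic theorem applied to the measure-preserving system $(M,f^N,\mu)$ therefore gives the $\mu$-a.e.\ equality of forward and backward Birkhoff averages of $\psi$ (both equal $\mathbb{E}_\mu[\psi\mid\mathcal{I}_{f^N}]$), which yields exactly the conclusion of the lemma.

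The argument is essentially routine once the reduction to $f^{-1}$ is recognized. The one delicate point is that $f^N$ need not be $\mu$-ergodic even when $f$ is (this is precisely why Lemma \ref{malyexoffN} gives a range $[\lambda_E^+,\lambda_E^++\varepsilon)$ rather than a single limit), but this non-ergodicity is handled transparently by the forward/backward equivalence in Birkhoff's theorem. No Lyapunov charts or adapted norms enter the proof, consistent with the paper's stated philosophy.
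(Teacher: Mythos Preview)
Your reduction is correct. The duality $m(d_yf^N|_{E(y)})=\|d_{f^Ny}f^{-N}|_{E(f^Ny)}\|^{-1}$, the relation $\lambda_E^-(f)=-\lambda_E^+(f^{-1})$, and the forward/backward equality in Birkhoff's theorem for the bounded function $\psi\in L^1(\mu)$ together give exactly the stated conclusion; the possible non-ergodicity of $f^N$ is indeed harmless since both time directions converge to the same conditional expectation.

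The paper, however, does not argue this way. Its ``proof'' consists only of the sentence ``It is a slight modification of the proof of Lemma~\ref{malyexoffN}. We omit it here.'' In other words, the authors intend the reader to rerun the argument of Lemma~8.4 in \cite{ABC11} with the operator norm replaced by the minimum norm throughout, obtaining the lower-exponent inequality directly. Your approach instead \emph{reduces} the statement to the already-proved Lemma~\ref{malyexoffN} applied to $f^{-1}$, trading a repetition of that proof for a short duality-plus-Birkhoff argument. The advantage of your route is that it requires no inspection of the internal structure of the proof in \cite{ABC11}; the advantage of the paper's (implicit) route is that it avoids the extra layer of the forward/backward Birkhoff identification and the index-shift bookkeeping. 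Both are legitimate and short.
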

\begin{proof}
It is a slight modification of the proof of Lemma \ref{malyexoffN}. We
omit it here.
\end{proof}

\subsection{ $(\rho,\beta,\gamma)$-rectangle of a compact subset}
Let $(f,M)$ be as above and $\mu$ be a hyperbolic ergodic $f$-invariant Borel probability measure on $M$.
For $\mu$ almost every $x\in M$, denote the Lyapunov exponents by
$$\lambda_1(\mu)<\cdots<\lambda_s(\mu)<0<\lambda_{s+1}(\mu)<\cdots<\lambda_l(\mu),$$
and the corresponding decomposition of its tangent space by $T_xM=E_1(x)\oplus \cdots \oplus E_l(x)$.
Denote $E^{s}=E_1\oplus\cdots\oplus E_s$ and $E^{u}=E_{s+1}\oplus\cdots\oplus E_l$.
Let $d_s=\dim E^s$, $d_u=\dim E^u$ with $d_s+d_u=\dim M$. Let $I=[-1,1]$, given $x\in M$, we say $R(x)\subseteq M$ is a \emph{rectangle} in $M$ centered at $x$ if there exists a $C^1$-embedding $\Phi_x:I^{\dim M}\to M$ such that $\Phi_x(I^{\dim M})=R(x)$ and $\Phi_x(0)=x$. A set $\widetilde{H}$ is called an \emph{admissible $u$-rectangle} in $R(x)$, if there exist $0<\widetilde{\lambda}<1$, $C^1$-maps $\phi_1,\phi_2: I^{d_u}\to I^{d_s}$ satisfying $\|\phi_1(u)\|\geq\|\phi_2(u)\|$ for $u\in I^{d_u}$ and $\|d\phi_i\|\leq\widetilde{\lambda}$ for $i=1,2$ such that $\widetilde{H}=\Phi_x(H)$, where
$$H=\{(u,v)\in I^{d_u}\times I^{d_s}:v=t\phi_1(u)+(1-t)\phi_2(u),0\leq t\leq1\}.$$
Similarly we can define an \emph{admissible $s$-rectangle} in $R(x)$.

\begin{definition}\label{smallrectangle}
Let $(f,M)$ be as above and $\Lambda\subseteq M$ be compact. We say $R(x)$ is a $(\rho,\beta,\gamma)$-rectangle of $\Lambda$ for $\rho>\beta>0,\gamma>0$, if there exists $\widetilde{\lambda}=\widetilde{\lambda}(\rho,\beta,\gamma)$ satisfying
\begin{itemize}
\item[(1)] $x\in\Lambda$, $B(x,\beta)\subseteq int\ R(x)$ and $\diam R(x)\leq\frac{\rho}{3}$.
\item[(2)] If $z,f^mz\in\Lambda\cap B(x,\beta)$ for some $m>0$, then the connected component $C(z,R(x)\cap f^{-m}R(x))$ of $R(x)\cap f^{-m}R(x)$ containing $z$ is an admissible $s$-rectangle in $R(x)$, and $f^m(C(z,R(x)\cap f^{-m}R(x)))$ is an admissible $u$-rectangle in $R(x)$.
\item[(3)] $\diam f^k(C(z,R(x)\cap f^{-m}R(x)))\leq\rho e^{-\gamma\min\{k,m-k\}}$ for $0\leq k\leq m$.
\end{itemize}
\end{definition}

Gelfert \cite{Gelfert16} proved that there is a finite collection of $(\rho,\beta,\gamma)$-rectangles for $(f,M,\mu)$. We only state the lemma here. See Lemma $2$ in \cite{Gelfert16} for a proof. We also refer to \cite{KH95} for more information about $(\rho,\beta,\gamma)$-rectangles.
\begin{lemma}\label{existofrectangle}
Let $f$ be a $C^1$-diffeomorphsim of a compact Riemannian manifold $M$ and $\mu$ be a hyperbolic ergodic $f$-invariant Borel probability measure on $M$.
Assume that $T_{\supp\mu}M=E^{s}\oplus_< E^{u}$ is dominated. Let $\chi:=\lambda(\mu)=\min_{1\leq i\leq l}|\lambda_i(\mu)|$. Given $\rho>0$ and $\delta>0$,
then there exists a compact set $\Lambda_H=\Lambda_H(\rho,\delta,\frac{\chi}{2})$ with $\mu(\Lambda_H)>1-\frac{\delta}{3}$, a constant $\beta=\beta(\rho,\delta)>0$ and a finite collection of $(\rho,\beta,\frac{\chi}{2})$-rectangles $R(q_1),R(q_2),\cdots,R(q_t)$ with $q_j\in\Lambda_H$ so that $\Lambda_H\subseteq\bigcup_{j=1}^tB(q_j,\beta)$.
\end{lemma}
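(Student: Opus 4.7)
The plan is to carry out a Katok-style construction adapted to the $C^1$ dominated setting, substituting dominance and cone-field invariance for the $C^{1+\alpha}$ Pesin--Lyapunov charts. First I would extract from the Oseledec basin $\Gamma$ a compact ``regular'' set $\Lambda_H\subset\Gamma$ on which the hyperbolic estimates hold uniformly. Applying Oseledec's theorem together with Lemmas~\ref{malyexoffN} and~\ref{milyexoffN} to $E^s$ and $E^u$, and invoking Egorov's theorem on the Birkhoff averages $\frac{1}{kN}\sum_{l=0}^{k-1}\log\|d_{f^{lN}(x)}f^N|_{E^s}\|$ and $\frac{1}{kN}\sum_{l=0}^{k-1}\log m(d_{f^{lN}(x)}f^N|_{E^u})$, I would fix a sufficiently large integer $N$ and a compact set $\Lambda_H$ with $\mu(\Lambda_H)>1-\delta/3$ so that for every $x\in\Lambda_H$ and every $k\geq 1$ the finite-time contraction/expansion rates of $d_xf^{kN}$ on $E^s$ and $E^u$ are within $\chi/4$ of the true Lyapunov exponents.

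Next, I would build the rectangles from the continuous extension of the dominated splitting. By Remark~\ref{conds}(3), extend $E^s\oplus_< E^u$ continuously to an open neighborhood $U$ of $\supp\mu$ and fix narrow $df$-invariant cone fields $C^u_\alpha$ around $E^u$ and $df^{-1}$-invariant cone fields $C^s_\alpha$ around $E^s$ on $U$; cone invariance is the defining feature of a dominated splitting. For each $x\in\Lambda_H$ define $R(x)=\exp_x(D^s_r(0)\times D^u_r(0))$ with respect to $E^s(x)\oplus E^u(x)$ for some $r\leq\rho/6$. Choose $\beta=\beta(\rho,\delta)>0$ small enough that $B(x,\beta)\subset\operatorname{int} R(x)$ and $\diam R(x)\leq\rho/3$ for every $x\in\Lambda_H$; this yields condition (1). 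Compactness of $\Lambda_H$ then provides a finite subcover $\{B(q_j,\beta)\}_{j=1}^t$ with $q_j\in\Lambda_H$.

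Condition (2) would then follow from the standard dynamical characterization of the invariant component: if $z,f^mz\in\Lambda_H\cap B(x,\beta)$, the connected component $C(z,R(x)\cap f^{-m}R(x))$ consists of points whose $m$-orbit stays in $R(x)$, and its tangent spaces must lie in $C^s_\alpha$, because any tangent vector outside this cone would be expanded by some $d_zf^k$ (by $df$-invariance of $C^u_\alpha$ together with the uniform expansion estimate of the first step) and thus exit $R(x)$. Hence $C(z,\ldots)$ is an admissible $s$-rectangle, and $f^mC(z,\ldots)$ is an admissible $u$-rectangle by the symmetric backward argument applied via $df^{-1}$-invariance of $C^s_\alpha$. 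For condition (3), the diameter decay $\rho e^{-\gamma\min\{k,m-k\}}$ with $\gamma=\chi/2$ follows from propagating the uniform hyperbolicity on returns to $\Lambda_H$ to all intermediate iterates via cone control: run the forward contraction estimate on the admissible $s$-rectangle for $0\leq k\leq m/2$ and the backward expansion estimate on the $u$-rectangle $f^m C(z,\ldots)$ for $m/2\leq k\leq m$.

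The main obstacle is precisely condition (3) in the $C^1$ setting. Unlike the $C^{1+\alpha}$ case treated by Katok, one cannot rely on Lyapunov charts and the adapted Lyapunov norm to obtain pointwise uniform contraction and expansion along every segment of every orbit. The replacement is a two-layer argument: Lemmas~\ref{malyexoffN}--\ref{milyexoffN} give uniform finite-time estimates only at the return times $kN$ to $\Lambda_H$, while the dominated splitting --- through the narrow $df^{\pm1}$-invariant cone fields extended to a neighborhood --- controls the tangent norms on the intermediate iterates and prevents the orbit from leaving the effective hyperbolic regime. The interplay of these two ingredients, combined with the finite-cover choice of $\beta$ forcing any orbit segment with both endpoints in $\Lambda_H\cap B(q_j,\beta)$ to track the hyperbolic behavior of $\Lambda_H$, is what delivers the uniform exponential diameter decay and makes the $R(q_j)$ genuine $(\rho,\beta,\chi/2)$-rectangles.
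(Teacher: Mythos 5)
The paper itself does not prove this lemma: it is stated as a citation, with the proof deferred to Gelfert \cite{Gelfert16} (Lemma 2 there), so there is no in-paper argument to compare against. Your plan --- Egorov-ize the finite-time estimates of Lemmas~\ref{malyexoffN} and~\ref{milyexoffN} to produce $\Lambda_H$, extend the dominated splitting continuously near $\supp\mu$ to obtain invariant cone fields, define rectangles via the exponential map and extract a finite cover by compactness, then verify conditions (2)--(3) by a cone/graph-transform argument --- follows the structure such a proof must have, and you correctly isolate condition (3) as the place where the $C^1$ hypothesis bites.

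The places where the sketch is too loose are exactly at that crux. First, the Egorov-ized bounds are not ``at return times $kN$ to $\Lambda_H$'': after Egorov one has, for every $z$ in the good set and every $k\geq K_\varepsilon$, a bound $\prod_{\iota=0}^{k-1}\|d_{f^{\iota N}z}f^N|_{E^s}\|\leq e^{kN(\lambda_s(\mu)+2\varepsilon)}$ with no requirement that the intermediate points $f^{\iota N}z$ lie in $\Lambda_H$; this observation is precisely what replaces the Lyapunov charts and should be made explicit, since as phrased your argument suggests you need returns. Second, the two-sided decay $\diam f^k(C)\leq\rho e^{-\gamma\min\{k,m-k\}}$ uses the forward estimate from $z$ for $k$ up to $m/2$ and the backward estimate from $f^mz$ beyond $m/2$, so the hypothesis $z,f^mz\in\Lambda_H$ must be invoked at both ends; the off-multiple-of-$N$ steps contribute only a fixed multiplicative constant (since $N$ is fixed), absorbed once $m$ is large, and one must take the Egorov parameter $\varepsilon$ small enough that $\chi-2\varepsilon\geq\chi/2$ to get $\gamma=\chi/2$. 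Third, your justification of (2), that a tangent vector outside the $s$-cone ``would be expanded and exit $R(x)$,'' presupposes cone-field control along all intermediate iterates of nearby points, which is part of what must be proved; this requires the standard bootstrap/graph-transform argument, for which the uniform cone invariance on a whole neighborhood of $\supp\mu$ (not just on orbits in $\supp\mu$) is indispensable. None of these is fatal, but a complete proof has to nail them down.
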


\subsection{A necessary and sufficient condition for hyperbolic sets}
Let $T_xM=F(x)\oplus G(x)$ be a splitting of the tangent space at $x\in M$ and let $\theta\in(0,1)$ be small, we define the cones at $x$ with respect to this decomposition of $TM$  by
$$C^F_\theta(x)=\big{\{}v+w \in F(x)\oplus G(x): v\in F(x), w\in G(x)\ \text{and}\ \|w\|\leq\theta\|v\|\big{\}},$$
and $$C^G_\theta(x)=\big{\{}v+w\in F(x)\oplus G(x): v\in F(x), w\in G(x)\ \text{and}\ \|v\|\leq\theta\|w\|\big{\}}.$$
We define a projection $\pi_F(x)$ from $T_xM$ to $F(x)$ with the kernel $G(x)$ by
$\pi_F(x)(u)=v$, for every $u=v+w\in T_xM$ with $v\in F(x)$ and $w\in G(x)$. Similarly we can define $\pi_G(x)$.
The following Theorem is Theorem $6.1.2$ in \cite{BP07}, which gives a characterization of hyperbolic sets in terms of cones.

\begin{theorem}\label{hyperbolicity}
A compact invariant set $\Lambda\subseteq M$ of a $C^1$ diffeomorphism $f$ of a compact manifold is hyperbolic if and only if there exists a Riemannian metric on $M$, a continuous splitting
$$T_xM=F(x)\oplus G(x)\ \text{for each } x\in\Lambda,$$
a constant $\lambda\in(0,1)$, and a continuous function $\theta: \Lambda\to\mathbb{R}^+$ such that for $x\in\Lambda$,
\begin{itemize}
\item [ (1) ] $d_xf \Big{(} C^G_{\theta(x)}(x)\Big{)}\subseteq C^G_{\theta(fx)}(fx)$ and $d_xf^{-1} \Big{(} C^F_{\theta(x)}(x)\Big)\subseteq C^F_{\theta(f^{-1}x)}(f^{-1}x)$,
\item [ (2) ] If $v\in C^G_{\theta(x)}(x)$, then $\|d_xf(v)\|\geq\lambda^{-1}\|v\|$. If $v\in C^F_{\theta(x)}(x)$, then $\|d_xf^{-1}(v)\|\geq\lambda^{-1}\|v\|$.
\end{itemize}
\end{theorem}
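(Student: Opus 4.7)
The theorem is an equivalence, so I will handle the two directions separately; the ``only if'' direction is essentially a soft consequence of choosing a good metric, while the ``if'' direction carries the real content.

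\textbf{Forward direction (hyperbolic $\Rightarrow$ cones).} Given the hyperbolic splitting $T_\Lambda M = E^s \oplus E^u$ with constants $c,\tau$, I would first pass to an adapted (Lyapunov/Mather) Riemannian metric $\|\cdot\|'$ equivalent to the original, in which the multiplicative constant $c$ can be taken to be $1$; hence there exists $\tau_0\in(\tau,1)$ with $\|d_xf|_{E^s(x)}\|'\le \tau_0$ and $m(d_xf|_{E^u(x)})'\ge \tau_0^{-1}$ uniformly on $\Lambda$. Set $F(x):=E^s(x)$, $G(x):=E^u(x)$; continuity of the invariant splitting over a uniformly hyperbolic set is classical. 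Fix $\theta_0>0$ small and take $\theta\equiv\theta_0$. For $v=v_F+v_G\in C^G_{\theta_0}(x)$ with $\|v_F\|'\le\theta_0\|v_G\|'$, decomposing $d_xf(v)=d_xf(v_F)+d_xf(v_G)$ in the $(F,G)$ decomposition at $fx$ and using the adapted bounds yields $\|d_xf(v_F)\|'\le \tau_0\|v_F\|'$ and $\|d_xf(v_G)\|'\ge \tau_0^{-1}\|v_G\|'$, so $d_xf(v)\in C^G_{\tau_0^2\theta_0}(fx)\subsetneq C^G_{\theta_0}(fx)$ and $\|d_xf(v)\|'\ge \lambda^{-1}\|v\|'$ for some $\lambda\in(\tau_0,1)$. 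A symmetric calculation handles $C^F_{\theta_0}$ under $d_xf^{-1}$.

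\textbf{Reverse direction (cones $\Rightarrow$ hyperbolic).} Define
\begin{equation*}
E^u(x):=\bigcap_{n\ge 0} d_{f^{-n}x}f^n\bigl(C^G_{\theta(f^{-n}x)}(f^{-n}x)\bigr),\qquad E^s(x):=\bigcap_{n\ge 0} d_{f^n x}f^{-n}\bigl(C^F_{\theta(f^n x)}(f^n x)\bigr).
\end{equation*}
Hypothesis (1) ensures these are nested decreasing intersections of closed cones, and $df$-invariance of $E^{s,u}$ is immediate. For a vector $v\in E^u(x)\subseteq C^G_{\theta(x)}(x)$, hypothesis (2) applied iteratively together with the forward-invariance of $C^G_\theta$ gives $\|d_xf^n(v)\|\ge \lambda^{-n}\|v\|$ for all $n\ge 0$; symmetrically $\|d_xf^{-n}(v)\|\ge \lambda^{-n}\|v\|$ for $v\in E^s(x)$. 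Because $\theta(x)<1$ forces $C^F_{\theta(x)}(x)\cap C^G_{\theta(x)}(x)=\{0\}$, the subspaces $E^s(x)$ and $E^u(x)$ meet only at zero, so once I know their dimensions are $\dim F$ and $\dim G$ respectively, I get $T_xM=E^s(x)\oplus E^u(x)$ and the standard hyperbolic estimates follow, with continuity of the splitting inherited from continuity of the defining cones.

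\textbf{The key obstacle.} The hard part is proving that $E^u(x)$ (resp.\ $E^s(x)$) is a linear subspace of the full dimension $\dim G$ (resp.\ $\dim F$), i.e.\ that the nested cones $d_{f^{-n}x}f^n(C^G_{\theta(f^{-n}x)}(f^{-n}x))$ actually shrink to a subspace of the expected dimension rather than to a strictly smaller cone. My plan is to quantify the ``angular width'' of these cones via the $F$-projection $\pi_F$ and prove a contraction lemma: if $v\in C^G_{\theta(x)}(x)$ is such that $\pi_F(v)$ is not of strictly smaller order than $\pi_G(v)$, then the ratio $\|\pi_F(d_xf^{-1}v)\|/\|\pi_G(d_xf^{-1}v)\|$ strictly exceeds the corresponding ratio at $x$, because hypothesis (2) for $C^F_\theta$ forces $F$-dominant components to be expanded by $d_xf^{-1}$ while $G$-dominant components are contracted. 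Iterating, any vector in $C^G_\theta$ whose $F$-component does not decay sufficiently fast eventually leaves every preimage cone, so only a codimension-$\dim F$ family of limiting directions survives in the intersection; using linearity of $d_xf^n$, this codimension-$\dim F$ family is a genuine linear subspace of dimension $\dim G$. A parallel argument with the roles of $F,G$ reversed handles $E^s$. After this is in place, all remaining items (direct sum, hyperbolic bounds, continuity) are routine.
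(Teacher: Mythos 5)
The paper does not prove this theorem: it is quoted as Theorem~6.1.2 of Barreira and Pesin \cite{BP07} and used as a black box, so there is no internal argument to compare against. Evaluating your attempt on its own terms: the forward direction is fine and is the standard adapted-metric computation. The scaffolding of the reverse direction (define $E^u$, $E^s$ as nested intersections of propagated cones; check $df$-invariance, expansion and contraction along these families, and that $C^F_{\theta}\cap C^G_{\theta}=\{0\}$) is also the right framework.

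The gap is in the step you explicitly flag as ``the key obstacle,'' and the plan you sketch to close it does not go through as written. You propose a contraction lemma for the ratio $\|\pi_F(d_xf^{-1}v)\|/\|\pi_G(d_xf^{-1}v)\|$, arguing that ``hypothesis (2) for $C^F_\theta$ forces $F$-dominant components to be expanded by $d_xf^{-1}$ while $G$-dominant components are contracted.'' But the reference splitting $T_xM=F(x)\oplus G(x)$ is not assumed to be $df$-invariant --- only the cone families are --- so $\pi_F(d_xf^{-1}v)$ is not $d_xf^{-1}(\pi_F(v))$; the map $d_xf^{-1}$ mixes the two components, and condition (2) gives norm estimates only for vectors that lie in one of the two cones, not for the individual projections of an arbitrary vector. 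In particular you do not even know that $d_xf^{-1}$ contracts vectors in $G(x)$ (that would need $d_xf^{-1}(G(x))\subseteq C^G_{\theta(f^{-1}x)}(f^{-1}x)$, and (1) only gives forward invariance of $C^G$). Separately, the concluding sentence ``using linearity of $d_xf^n$, this codimension-$\dim F$ family is a genuine linear subspace'' is an assertion, not an argument: a nested intersection of the non-convex closed cones $C^G_\theta$ is a priori just a closed cone. The standard ways to close this step are either a graph-transform contraction on the set of $\dim G$-planes inside $C^G_{\theta(x)}(x)$ (each such plane is the graph of a linear map $G(x)\to F(x)$ of norm $\le\theta(x)$), or Grassmannian compactness: extract a limit $G^*(x)$ of $d_{f^{-n}x}f^n(G(f^{-n}x))$, show it lies in $E^u(x)$ and is $df$-invariant, and then use the two-sided estimates from (2) --- which do give $\|d_xf^{-n}|_{E^u(x)}\|\le\lambda^n$ and $\|d_xf^{n}|_{E^s(x)}\|\le\lambda^n$ --- to rule out any vector in $E^u(x)\setminus G^*(x)$. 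Your proof would need one of these (or an equivalent) spelled out to be complete.
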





\section{The proof of the main theorem}
This section provides the proof of the main result stated in Section $1$. First of all, we construct the subset $\Lambda^*$ of $M$ by using Katok's technique \cite{Ka80} and $\Lambda^*$ is $f^m$-invariant for some sufficiently large positive integer $m$. Denote $\Lambda=\Lambda^*\cup f(\Lambda^*)\cup\cdots\cup f^{m-1}(\Lambda^*)$. We give that $\Lambda$ is in a neighborhood of $\supp\mu$. Then we use some properties of the dominated splitting and $C^1$ nonuniform hyperbolic dynamical systems to prove Lemma \ref{invariantofcone} and Corollary \ref{corofle}. We also obtain a dominated splitting $T_\Lambda M=\widetilde{F}_j\oplus\widetilde{G}_j$ over $\Lambda$ for each $j=1,2,\cdots,l-1$ in the proof of Lemma \ref{invariantofcone}. Therefore Lemma \ref{invariantofcone} and Theorem \ref{hyperbolicity} tell us that $\Lambda^*$ is a hyperbolic set with respect to $f^m$. Finally we prove there is a dominated splitting $T_\Lambda M=\widetilde{E}_1\oplus\widetilde{E}_2\oplus\cdots\oplus\widetilde{E}_{l}$ corresponding to Oseledec subspace over $\Lambda$, where $\widetilde{E}_1=\widetilde{F}_1$, $\widetilde{E}_j=\widetilde{F}_j\cap\widetilde{G}_{j-1}$ for $j=2,3,\cdots,l-1$ and $\widetilde{E}_l=\widetilde{G}_{l-1}$.
\begin{proof}
Since the splitting
$$T_{\Gamma}M=E_1\oplus\cdots\oplus E_l$$
is dominated splitting and $\supp(\mu)\subseteq\overline{\Gamma}$ (see P$693$ in \cite{KH95}), by Remark \ref{conds}, we have
$T_{\supp(\mu)}M=E_1\oplus \cdots \oplus E_s\oplus E_{s+1}\oplus \cdots \oplus E_l$ is a dominated splitting. Therefore the angles between $E_i(x)$ and $E_j(x)$ are uniformly bounded from zero for every $i,j \in\{1,2,\cdots, l\}$ with $i\neq j$ and $x \in \supp(\mu)$. Combing with Remark \ref{conds}, there is a small $0<\varepsilon_0<1$ satisfying the two properties:
\begin{itemize}
\item [ (1) ] we can extend the dominated splitting $$T_{\supp(\mu)}M=E_1\oplus_<\cdots\oplus_<E_s\oplus_<E_{s+1}\oplus_<\cdots\oplus_<E_l$$ to a continuous splitting in the $\varepsilon_0$-neighborhood $\mathcal{U}(\varepsilon_0, \supp(\mu))$ of $\supp(\mu)$.
\item [ (2) ] for any $x,y\in M$ with $d(x,y)<\varepsilon_0$, there exists a unique geodesic connecting $x$ and $y$.
\end{itemize}
For every $j\in\{1, \cdots, l-1\}$ we denote
$$F_j=E_1\oplus\cdots\oplus E_j \text{ and } G_j=E_{j+1}\oplus\cdots\oplus E_l.$$
It is easy to see $T_xM=F_j(x)\oplus G_j(x)$ for any $x\in\mathcal{U}(\varepsilon_0,\supp(\mu))$.
For any small $\theta\in(0,1)$, $x,y\in\mathcal{U}(\varepsilon_0,\supp(\mu))$ with $d(x,y)<\varepsilon_0$ and every $u\in C^{G_j}_\theta(y)$, let
$\widetilde{u}$ be the parallel transport of $u$ from $T_yM$ to $T_xM$ along the geodesic connecting $y$ and $x$.
By the Whitney Embedding Theorem, we assume, without loss of generality, that the manifold $M$ is embedded in $\mathbb{R}^{N_3}$ with a sufficiently large $N_3$. Since $f$ is a $C^1$ diffeomorphism and the splitting $T_{\mathcal{U}(\varepsilon_0, \supp(\mu))}M=F_j\oplus G_j$ is continuous, for the above $\varepsilon_0>0$, there is a $\rho_0\in(0,\frac{1}{2}\varepsilon_0)$ such that
\begin{itemize}
\item [ (3) ] for any $x,y\in\mathcal{U}(\varepsilon_0,\supp(\mu))$ with $d(x,y)<\rho_0$, if $u\in C^{G_j}_\theta(y)$, then $\widetilde{u}\in C^{G_j}_{\frac32\theta}(x)$.
\item [ (4) ] let $V=\mathcal{U}(\frac{1}{2}\varepsilon_0,\supp(\mu))$, for any $x,y\in V$ with $d(x,y)<\rho_0$, for every $u\in C^{G_j}_\theta(y)$, $\|u\|=1$, $i\in\{F_j,G_j\}$, then
$$\|\pi_i(fy)(d_yf(u)) - \pi_i(fx)(d_xf(\widetilde{u}))\|\leq\frac{1}{10}\theta\varepsilon_0 a,$$
where $a=\displaystyle{\min_{\substack{x\in\overline{V}\\ u\in C^{G_j}_{\theta}(x)\\ \|u\|=1}}}\|\pi_{G_j}(fx)\big(d_xf(u)\big)\|$.
\end{itemize}

By Katok's entropy formula, for each $\delta\in(0,1)$ we have
\begin{eqnarray*}
\begin{aligned}
h_\mu(f)&=\lim_{\widetilde{\rho}\to 0}\liminf_{n\to\infty}\frac{1}{n}\log N(\mu,n,\widetilde{\rho},\delta)\\
&=\lim_{\widetilde{\rho}\to 0}\limsup_{n\to\infty}\frac{1}{n}\log N(\mu,n,\widetilde{\rho},\delta),
\end{aligned}
\end{eqnarray*}
where $N(\mu,n,\widetilde{\rho},\delta)$ denotes the minimal number of $(n,\widetilde{\rho})$-Bowen balls that are needed to cover a set of measure $\mu$ at least $1-\delta$.
Denote $\vartheta=\min\{\lambda_2(\mu)-\lambda_1(\mu),\lambda_3(\mu)-\lambda_2(\mu),\cdots,\lambda_l(\mu)-\lambda_{l-1}(\mu)\}$.
Fixing $\delta\in(0,1)$, for any $\varepsilon\in(0,\varepsilon_0)$ with $\lambda_l(\mu)-3\varepsilon>0$ and $\vartheta-8\varepsilon>0$, there exists $0<\rho_1<\min\{\rho_0,\frac{\varepsilon}{2}\}$ and a positive integer $N_4$ such that any $\widetilde{\rho}\in(0,\rho_1)$ and $n\geq N_4$ we have
\begin{equation}\label{minimalnumber}
N(\mu,n,\widetilde{\rho},\delta)\geq e^{[h_\mu(f)-\frac12\varepsilon]n}.
\end{equation}
By Lemma \ref{malyexoffN} and Lemma \ref{milyexoffN}, there exists a subset $\Omega\subseteq M$ with $\mu(\Omega)=1$, and for the previous $\varepsilon>0$, $\exists N_1(\varepsilon)>0$ and $N_2(\varepsilon)>0$ such that any $N\geq\max\{N_1(\varepsilon), N_2(\varepsilon),N_4\}$, any $x\in\Omega$, $j=1,2,\cdots,l-1$, we have
\begin{equation*}
\begin{aligned}
\lim_{k\to+\infty}\frac {1}{kN}\sum_{\iota=0}^{k-1}\log\|d_{f^{\iota N}x}f^N|_{F_j(f^{\iota N}x)}\|&<\lambda_j(\mu)+\varepsilon, \\
\lim_{k\to+\infty}\frac {1}{kN}\sum_{\iota=0}^{k-1}\log m(d_{f^{\iota N}x}f^N|_{G_j(f^{\iota N}x)})&>\lambda_{j+1}(\mu)-\varepsilon,\\
\lim_{k\to+\infty}\frac {1}{kN}\sum_{\iota=0}^{k-1}\log m(d_{f^{\iota N}x}f^{N}|_{F_j(f^{\iota N}x)})&>\lambda_1(\mu)-\varepsilon, \\
\lim_{k\to+\infty}\frac {1}{kN}\sum_{\iota=0}^{k-1}\log \|d_{f^{\iota N}x}f^{N}|_{G_j(f^{\iota N}x)}\|&<\lambda_l(\mu)+\varepsilon.\\
\end{aligned}
\end{equation*}
 Fixing any $L_0>\max\{N_1({\varepsilon}),N_2(\varepsilon),N_4\}$, by the Egornov theorem, for $\delta$ as above, there exists a compact subset $\Omega_\delta\subset\Omega$ with $\mu(\Omega_\delta)>1-\frac{\delta}{3}$ and  a positive integer $K_\varepsilon>1$ such that for any $x\in \Omega_\delta$ and $k\geq K_\varepsilon$,  $j=1,2,\cdots,l-1$, we have
\begin{equation}\label{lyexoff}
\begin{aligned}
\prod_{\iota =0}^{k-1}\|d_{f^{\iota L_0}x}f^{L_0}|_{F_j(f^{\iota L_0}x)}\|&\leq e^{kL_0[\lambda_j(\mu)+2\varepsilon]},\\
 \prod_{\iota =0}^{k-1} m(d_{f^{\iota L_0}x}f^{L_0}|_{G_j(f^{\iota L_0}x)})&\geq e^{kL_0[\lambda_{j+1}(\mu)-2\varepsilon]},
\end{aligned}
\end{equation}
\begin{equation}\label{lyexoff-1}
\begin{aligned}
\prod_{\iota=0}^{k-1} m(d_{f^{\iota L_0}x}f^{L_0}|_{F_j(f^{\iota L_0}x)})&\geq e^{kL_0[\lambda_1(\mu)-2\varepsilon]},\\
 \prod_{\iota=0}^{k-1} \|d_{f^{\iota L_0}x}f^{L_0}|_{G_j(f^{\iota L_0}x)}\|&\leq e^{kL_0[\lambda_l(\mu)+2\varepsilon]}.
\end{aligned}
\end{equation}
For the above $\varepsilon>0$, there exist small $0<\rho_2<\rho_1$ and $\zeta_0>0$ so that for every $\iota=1,2,\cdots,L_0$,
\begin{equation}\label{contoff}
\begin{aligned}
e^{-\varepsilon}\leq\frac{\|d_xf^\iota(u)\|}{\|d_yf^\iota(v)\|}\leq e^\varepsilon, e^{-\varepsilon}\leq\frac{m (d_xf^\iota(u) )}{m(d_yf^\iota(v))}\leq e^\varepsilon,
\end{aligned}
\end{equation}
\begin{equation}\label{contoff-1}
\begin{aligned}
e^{-\varepsilon}\leq\frac{\|d_xf^{-\iota}(u)\|}{\|d_yf^{-\iota}(v)\|}\leq e^\varepsilon, e^{-\varepsilon}\leq\frac{m (d_xf^{-\iota}(u) )}{m(d_yf^{-\iota}(v))}\leq e^\varepsilon,
\end{aligned}
\end{equation}
whenever $d(x,y)<\rho_2$ and $\angle(u,v)<\zeta_0$. For any $0<\zeta<\zeta_0$, there exist small $\rho_3\in(0,\rho_2)$ and small $\theta_0\in(0,1)$ such that if $x,y\in\overline{V}$ with $d(x,y)<\rho_3$, then $\angle(w_1,w_2)<\zeta$ for any $0\neq w_1\in C_{\theta_0}^i(x), 0\neq w_2\in C_{\theta_0}^i(y)$ and $i\in\{F_j, G_j\}$.

Pick a countable basis $\{\varphi_i\}_{i\geq 1}$ (nonzero) of  the space $C^0(M)$ of all continuous functions on $M$. Recall that the space of $f$-invariant probabilities $\mathcal{M}_f(M)$ can be endowed with the metric $d: \mathcal{M}_f(M)\times\mathcal{M}_f(M)\to [0,1]$,
$$d(\mu,\nu):=\sum_{j=1}^\infty 2^{-j}\frac{1}{2\|\varphi_j\|_\infty}\Big|\int\varphi_jd\mu-\int\varphi_jd\nu\Big|$$
where $\|\varphi\|_\infty:=\sup_{x\in M}|\varphi(x)|$. Let $J$ be a positive integer satisfying $\frac{1}{2^J}<\frac{\varepsilon}{8}$ and $\rho\in(0,\frac{\rho_3}{2})$ such that
\begin{equation}\label{continuous}
\begin{aligned}
|\varphi_j(x)-\varphi_j(y)|\leq\frac{\varepsilon}{4}\|\varphi_j\|_\infty
\end{aligned}
\end{equation}
for any $x,y$ with $d(x,y)\leq\rho$, $j=1,2,\cdots,J$.

Let $\chi:=\lambda(\mu)=\min_{1\leq i\leq l}|\lambda_i(\mu)|$. For $\rho>0$ and $\delta>0$ as above, by Lemma \ref{existofrectangle}, there exists a compact set $\Lambda_H=\Lambda_H(\rho,\delta,\frac{\chi}{2})$ with $\mu(\Lambda_H)>1-\frac{\delta}{3}$, a constant $\beta=\beta(\rho,\delta)>0$ and a finite collection of $(\rho,\beta,\frac{\chi}{2})$-rectangles $R(q_1),R(q_2),\cdots,R(q_t)$ with $q_j\in\Lambda_H$ so that $\Lambda_H\subseteq\cup_{j=1}^tB(q_j,\beta)$.

Let $\mathcal{P}$ be a finite measurable partition of $\Lambda_H\cap\Omega_\delta\cap\supp(\mu)$ so that $P(q_j)\subseteq B(q_j,\beta)$ for $j=1,2,\cdots,t$. We can take $\mathcal{P}$ as follows:
\begin{eqnarray*}
\begin{aligned}
P_1&=B(q_1,\beta)\cap\Lambda_H\cap\Omega_\delta\cap\supp(\mu),\\
P_k&=\big{(}B(q_k,\beta)\cap\Lambda_H\cap\Omega_\delta\cap\supp(\mu)\big{)}\setminus\big{(}\cup_{j=1}^{k-1}P_j\big{)}\ \mbox{for}\ k=2,3,\cdots,t.
\end{aligned}
\end{eqnarray*}
Denote
\begin{eqnarray*}
\begin{aligned}
\Lambda_{H, \delta,n}=&\Big\{x\in\Lambda_H\cap\Omega_\delta\cap\supp(\mu): f^kx\in\mathcal{P}(x)\ \mbox{for some integer}\ k\in[n,(1+\varepsilon)n)\ \mbox{and} \\ &\Big|\frac{1}{m}\sum_{i=0}^{m-1}\varphi_j(f^ix)-\int\varphi_jd\mu\Big|\leq\frac{\varepsilon}{4}\|\varphi_j\|_\infty\ \mbox{for any}\ m\geq n\ \mbox{and}\ j=1,2,\cdots,J\Big\}.
\end{aligned}
\end{eqnarray*}

\begin{lemma}
$\lim_{n\to\infty}\mu(\Lambda_{H,\delta,n})=\mu(\Lambda_H\cap\Omega_\delta\cap\supp(\mu)).$
\end{lemma}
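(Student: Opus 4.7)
The plan is to decompose the condition defining $\Lambda_{H,\delta,n}$ into two pieces: the Birkhoff averages condition for the finitely many test functions $\varphi_1,\ldots,\varphi_J$, and the window return time condition $f^kx\in\mathcal{P}(x)$ for some $k\in[n,(1+\varepsilon)n)$. I would then show that the set of points in $\Lambda_H\cap\Omega_\delta\cap\supp(\mu)$ failing each condition has measure tending to $0$ as $n\to\infty$, so the intersection has measure tending to that of the full set.

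For the Birkhoff condition, the set of $x$ for which $\frac{1}{m}\sum_{i=0}^{m-1}\varphi_j(f^ix)\to\int\varphi_j\,d\mu$ for every $j=1,\ldots,J$ has full $\mu$-measure by Birkhoff's ergodic theorem applied to the finite collection $\{\varphi_j\}_{j=1}^J$ together with ergodicity of $\mu$. For each such $x$ there is a smallest $n_0(x)$ from which the $\varepsilon/4$-estimate holds uniformly in $m\geq n_0(x)$ and in $j=1,\ldots,J$; the sets $\{x:n_0(x)\leq n\}$ are nondecreasing in $n$ with union of full measure, so their measure tends to $1$.

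For the window return time condition, I would argue one partition element $P_i$ at a time. For each $i$ with $\mu(P_i)>0$, I apply Birkhoff's theorem to $\mathbf{1}_{P_i}$: for $\mu$-a.e.\ $x\in P_i$ one has
\[
\frac{1}{n}\sum_{k=0}^{n-1}\mathbf{1}_{P_i}(f^kx)\longrightarrow\mu(P_i)\quad\text{and}\quad \frac{1}{\lceil(1+\varepsilon)n\rceil}\sum_{k=0}^{\lceil(1+\varepsilon)n\rceil-1}\mathbf{1}_{P_i}(f^kx)\longrightarrow\mu(P_i).
\]
If $x$ admitted no return in $[n,(1+\varepsilon)n)$ for infinitely many $n$, then along that subsequence the two sums above would coincide, forcing $\mu(P_i)=\mu(P_i)/(1+\varepsilon)$ in the limit, contradicting $\mu(P_i)>0$. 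Hence for $\mu$-a.e.\ $x\in P_i$ the first condition holds for all sufficiently large $n$; dominated convergence (with $\mathbf{1}_{\Lambda_H\cap\Omega_\delta\cap\supp(\mu)}$ as dominating function) and summing over the finitely many $P_i$'s with $\mu(P_i)>0$ yields that the measure of the failure set tends to $0$.

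Combining the two steps via subadditivity of $\mu$ on the complements within $\Lambda_H\cap\Omega_\delta\cap\supp(\mu)$ gives $\mu(\Lambda_{H,\delta,n})\to\mu(\Lambda_H\cap\Omega_\delta\cap\supp(\mu))$. The only mildly delicate point is the window return argument, since one must rule out arbitrarily long gaps with no visit to $P_i$; the monotone comparison of the two Birkhoff sums provides the cleanest way to handle this, avoiding any appeal to Kac's lemma or quantitative recurrence estimates. Everything else reduces to a finite union of full-measure sets from Birkhoff's theorem.
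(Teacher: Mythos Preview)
Your proposal is correct and follows essentially the same approach as the paper: decompose $\Lambda_{H,\delta,n}$ into the return-time condition and the Birkhoff-average condition, and handle each by applying Birkhoff's ergodic theorem (to $\mathbf{1}_{P_i}$ and to the $\varphi_j$, respectively). The only cosmetic difference is that the paper verifies the window return condition by a direct count showing $\#\{k\in[n,(1+\varepsilon)n):f^kx\in P_j\}\geq n[\mu(P_j)\varepsilon-(2+\varepsilon)\tau]>0$ once the Birkhoff average of $\mathbf{1}_{P_j}$ is $\tau$-close to $\mu(P_j)$, whereas you phrase the same idea as a contradiction comparing the two partial sums; both are equivalent uses of the ergodic theorem.
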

\begin{proof} Let
\begin{eqnarray*}
\begin{aligned}
A_n&=\Big\{x\in\Lambda_H\cap\Omega_\delta\cap\supp(\mu): f^kx\in P(x)\ \mbox{for some integer}\ k\in[n,(1+\varepsilon)n)\Big\},\\
A_{n,j}&=\Big\{x\in P_j: f^kx\in P_j\ \mbox{for some integer}\ k\in[n,(1+\varepsilon)n)\Big\}.
\end{aligned}
\end{eqnarray*}
It is easy to see $A_n=\cup_{j=1}^tA_{n,j}$. For any $P_j$ with $\mu(P_j)>0$ and small $\tau>0$, denote
$$A_{n,j}^\tau=\Big\{x\in P_j: \mu(P_j)-\tau\leq\frac{1}{m}\sum_{j=0}^{m-1}\chi_{P_j}(f^jx)\leq\mu(P_j)+\tau\ \mbox{for any}\ m\geq n\Big\}.$$
By Birkhoff Ergodic Theorem, $\mu(\cup_{n\geq 1}A_{n,j}^\tau)=\mu(P_j)$. Since $A_{1,j}^\tau\subseteq A_{2,j}^\tau\subseteq\cdots\subseteq A_{n,j}^\tau\subseteq \cdots$,
we conclude $\lim_{n\to\infty}\mu(A_{n,j}^\tau)=\mu(P_j)$. For any $x\in A_{n,j}^\tau$,
\begin{eqnarray*}
\begin{aligned}
&\ \ \ \ Card\Big\{k\in[n,(1+\varepsilon)n): f^kx\in P_j\Big\}\\
&\geq Card\Big\{k\in[0,(1+\varepsilon)n): f^kx\in P_j\Big\} - Card\Big\{k\in[0,n): f^kx\in P_j\Big\}\\
&\geq \big[\mu(P_j)-\tau\big]\cdot(1+\varepsilon)n - \big[\mu(P_j)+\tau\big]n\\
&= n\cdot\big[\mu(P_j)\varepsilon-2\tau-\tau\varepsilon\big].
\end{aligned}
\end{eqnarray*}
Taking $0<\tau<\frac{\varepsilon}{2+\varepsilon}\min\big\{\mu(P_j): \mu(P_j)>0, j=1,2,\cdots,t\big\}$, therefore $$Card\big\{k\in[n,(1+\varepsilon)n): f^kx\in P_j\big\}>1$$ for $n$ large enough.
This yields that $x\in A_{n,j}$. Thus $A_{n,j}^\tau\subseteq A_{n,j}$. Then we have
\begin{equation}\label{limset}
\begin{aligned}
\lim_{n\to\infty}\mu(A_n)=\mu(\Lambda_H\cap\Omega_\delta\cap\supp(\mu)).
\end{aligned}
\end{equation}

Let
\begin{equation*}
\begin{aligned}
B_n=&\Big\{x\in\Lambda_H\cap\Omega_\delta\cap\supp(\mu): \Big|\frac{1}{m}\sum_{k=0}^{m-1}\varphi_j(f^kx)-\int\varphi_jd\mu\Big|\leq\frac{\varepsilon}{4}\|\varphi_j\|_\infty\\
&\mbox{for any}\ m\geq n\ \mbox{and}\ 1\leq j\leq J\Big\}.
\end{aligned}
\end{equation*}
Birkhoff Ergodic Theorem tells us that $\mu(\cup_{n\geq 1}B_n)=\mu(\Lambda_H\cap\Omega_\delta\cap\supp(\mu))$. Since $B_1\subseteq B_2\subseteq\cdots\subseteq B_n\subseteq\cdots$, we have $\lim_{n\to\infty}\mu(B_n)=\mu(\Lambda_H\cap\Omega_\delta\cap\supp(\mu))$. Combining with (\ref{limset}),
$$\lim_{n\to\infty}\mu(\Lambda_{H,\delta,n})=\lim_{n\to\infty}\mu(A_n\cap B_n)=\mu(\Lambda_H\cap\Omega_\delta\cap\supp(\mu)).$$
This shows the lemma.
\end{proof}

We proceed to prove the main theorem. Taking
\begin{equation}\label{star}
\begin{aligned}
N_5>\max\Big{\{}&\frac{1}{\varepsilon}, 2(K_\varepsilon+1) L_0, \frac{4\log Q}{\varepsilon}, \frac{4\log Q_1}{\varepsilon}, \frac{4\log Q_2}{\varepsilon}, \frac{4}{\varepsilon}\log t,\\
 &\frac{4(K_\varepsilon+1)L_0[\lambda_l(\mu)-3\varepsilon]}{\varepsilon}, \frac{2(K_\varepsilon+1)L_0 (\overline{\vartheta}-6\varepsilon)}{\varepsilon}, N_4\Big{\}}
\end{aligned}
\end{equation}
large enough with $\mu(\Lambda_{H,\delta,N_5})>\mu(\Lambda_H\cap\Omega_\delta\cap\supp(\mu))-\frac{\delta}{3}>1-\delta$ and $N_5\varepsilon<e^{\frac12N_5\varepsilon}$,
where
\begin{equation*}
\begin{aligned}
Q=\sup\Big\{&\prod_{\iota=0}^{k-1}\|d_{f^{\iota L_0}x}f^{L_0}|_{F_j(f^{\iota L_0}x)}\|, \prod_{\iota=0}^{k-1}m(d_{f^{\iota L_0}x}f^{L_0}|_{G_j(f^{\iota L_0}x)})^{-1},\\
&\prod_{\iota=0}^{k-1} m(d_{f^{\iota L_0}x}f^{L_0}|_{F_1(f^{\iota L_0}x)})^{-1}, \prod_{\iota=0}^{k-1}\|d_{f^{\iota L_0}x}f^{L_0}|_{G_{l-1}(f^{\iota L_0}x)}\|:\\
& k=1,2,\cdots, K_\varepsilon-1, \text{ and }j=1,2,\cdots,l-1, x\in\Omega_\delta\Big\},\\
Q_1=\sup\Big\{&1,m(d_xf|_{G_j(x)})^{-1},m(d_xf^2|_{G_j(x)})^{-1},\cdots,m(d_xf^{L_0-1}|_{G_j(x)})^{-1}, \\
&m(d_xf^{-1}|_{G_j(x)})^{-1},m(d_xf^{-2}|_{G_j(x)})^{-1},\cdots,m(d_xf^{-(L_0-1)}|_{G_j(x)})^{-1}:\\
&j=1,2, \cdots,l-1, \text{ and } x \text{ belongs to the }\varepsilon\text{-neighborhood of } \supp(\mu)\Big\},\\
Q_2=\sup\Big\{&1,\|d_xf|_{F_j(x)}\|,\cdots,\|d_xf^{L_0-1}|_{F_j(x)}\|,\|d_xf^{-1}|_{F_j(x)}\|,\cdots,\|d_xf^{-L_0+1}|_{F_j(x)}\|:\\
&j=1,2,\cdots,l-1,  \text{ and } x \text{ belongs to the }\varepsilon\text{-neighborhood of } \supp(\mu)\Big\},\\
\overline{\vartheta}=\max\Big\{&\lambda_2(\mu)-\lambda_1(\mu), \lambda_3(\mu)-\lambda_2(\mu), \cdots, \lambda_l(\mu)-\lambda_{l-1}(\mu)\Big\}.
\end{aligned}
\end{equation*}
Choose a maximal $(N_5,2\rho)$-separated subset $E\subseteq\Lambda_{H,\delta,N_5}$. Hence $\cup_{x\in E}B_{N_5}(x,2\rho)\supseteq\Lambda_{H,\delta,N_5}$. By (\ref{minimalnumber}),
\begin{eqnarray*}
\begin{aligned}
Card(E)&\geq N\Big(\mu,N_5,2\rho,1-\mu(\Lambda_{H,\delta,N_5})\Big)\\
&\geq N(\mu,N_5,2\rho,\delta)\\
&\geq e^{[h_\mu(f)-\frac12\varepsilon]N_5}.
\end{aligned}
\end{eqnarray*}
For $N_5\leq k< (1+\varepsilon)N_5$, let $\Delta_k=\big\{x\in E: f^kx\in P(x)\big\}$. Take $m\in[N_5,(1+\varepsilon)N_5)$ with $Card( \Delta_m)=\max\big\{Card (\Delta_k): N_5\leq k<(1+\varepsilon)N_5\big\}$. Then
\begin{eqnarray*}
\begin{aligned}
Card (\Delta_m)&\geq \frac{1}{N_5\varepsilon}Card (E)\\
&\geq \frac{1}{N_5\varepsilon}e^{[h_\mu(f)-\frac12\varepsilon]\cdot N_5}\\
&\geq e^{[h_\mu(f)-\varepsilon]\cdot N_5}.
\end{aligned}
\end{eqnarray*}
Choose $P\in\mathcal{P}$ with
$$Card(\Delta_m\cap P)=\max\Big\{Card(\Delta_m\cap P_k): 1\leq k\leq t\Big\}.$$
Thus $Card(\Delta_m\cap P)\geq\frac{1}{t}Card(\Delta_m)\geq\frac{1}{t}e^{[h_\mu(f)-\varepsilon]\cdot N_5}$.
Possibly neglecting some points in $E$ then we can guarantee that
\begin{equation}\label{uplowbounded}
\frac{1}{t}e^{[h_\mu(f)-\varepsilon]N_5}\leq Card(\Delta_m\cap P)\leq Card(\Delta_m)\leq Card(E) \leq e^{[h_\mu(f)+\varepsilon]N_5}.
\end{equation}
By the definition of the partition $\mathcal{P}$, there exists $q\in\{q_1,q_2,\cdots,q_t\}$ such that $P\subseteq B(q,\beta)\cap\Lambda_H\cap\Omega_\delta\cap\supp(\mu)$. For any $x\in \Delta_m\cap P$,
since $x,f^mx\in B(q,\beta)\cap\Lambda_H$, by Definition \ref{smallrectangle},
$$C\Big(x,R(q)\cap f^{-m}R(q)\Big)\ \mbox{and}\ f^m\Big(C\big(x,R(q)\cap f^{-m}R(q)\big)\Big)$$
are admissible $s$-rectangle and $u$-rectangle in $R(q)$ respectively for some number $\widetilde{\lambda}>0$.

Notice that for any $x_1,x_2\in \Delta_m\cap P$ with $x_1\neq x_2$, $$C\Big(x_1,R(q)\cap f^{-m}R(q)\Big)\cap C\Big(x_2,R(q)\cap f^{-m}R(q)\Big)=\emptyset.$$
In fact, suppose $y\in C\Big(x_1,R(q)\cap f^{-m}R(q)\Big)\cap C\Big(x_2,R(q)\cap f^{-m}R(q)\Big)$, by Definition \ref{smallrectangle}, we obtain
$d(f^kx_j,f^ky)\leq\rho\ \mbox{for}\ k=0,1,2,\cdots,m, \text{ and } j=1,2.$
Then we have
$$d_m(x_1,x_2)\leq d_m(x_1,y)+d_m(y,x_2)\leq 2\rho.$$
Since $\Delta_m\cap P$ is a $(N_5,2\rho)$-separate set, $$d_m(x_1,x_2)\geq d_n(x_1,x_2)>2\rho$$ which contradicts $d_m(x_1,x_2)\leq 2\rho$. This implies that there are $Card(\Delta_m\cap P)$ disjoint
admissible $s$-rectangles which mapped under $f^m$ onto $Card(\Delta_m\cap P)$ disjoint admissible $u$-rectangles.

Let $$\Lambda^*=\cap_{n\in\mathbb{Z}}f^{-nm}\Big(\cup_{x\in \Delta_m\cap P}C\big(x,R(q)\cap f^{-m}R(q)\big)\Big).$$ By the construction, $\Lambda^*$ is locally maximal with respect to $f^m$ and to the closed neighborhood $\cup_{i=1}^tR(q_i)$. It also implies that $f^m|_{\Lambda^*}$ is topologically conjugate to a full two-side shift in the symbolic space with $Card(\Delta_m\cap P)$ symbols. Let $\Lambda=\Lambda^*\cup f(\Lambda^*)\cup\cdots\cup f^{m-1}(\Lambda^*)$.
We claim that $\Lambda$ is in a neighborhood of $\supp(\mu)$.

\begin{claim}\label{hyperinne}
$\Lambda \subseteq \mathcal{U}(\rho,\supp(\mu))$.
\end{claim}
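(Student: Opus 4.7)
The claim reduces to the assertion that every orbit point $f^j(z)$ with $z\in\Lambda^*$ and $0\le j\le m-1$ is $\rho$-close to $\supp(\mu)$. The entire mechanism is built into Definition \ref{smallrectangle}(3), whose role is precisely to force the connected components $C(x,R(q)\cap f^{-m}R(q))$ to shadow their centres under every iterate $f^j$ for $0\le j\le m$; and our centres $x\in\Delta_m\cap P$ lie in $\supp(\mu)$ by construction of the partition $\cP$.

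\textbf{Key steps.} First, write $\Lambda=\bigcup_{j=0}^{m-1}f^j(\Lambda^*)$, so it suffices to prove $f^j(\Lambda^*)\subseteq\mathcal{U}(\rho,\supp(\mu))$ for each fixed $j\in\{0,1,\dots,m-1\}$. Second, take any $z\in\Lambda^*$ and use the $n=0$ factor in the intersection defining $\Lambda^*$ to select $x_0\in\Delta_m\cap P$ with $z\in C(x_0,R(q)\cap f^{-m}R(q))$; note that the centre $x_0$ also belongs to this connected component. Third, since $x_0\in\Delta_m\cap P$ satisfies $x_0,f^m x_0\in B(q,\beta)\cap\Lambda_H$, Definition \ref{smallrectangle}(3) applies and yields
$$\diam f^j\bigl(C(x_0,R(q)\cap f^{-m}R(q))\bigr)\le \rho\,e^{-\gamma\min\{j,m-j\}}\le \rho,$$
so that $d(f^j(z),f^j(x_0))\le\rho$. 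Fourth, since $x_0\in P\subseteq\supp(\mu)$ and $\supp(\mu)$ is $f$-invariant, $f^j(x_0)\in\supp(\mu)$, giving $d(f^j(z),\supp(\mu))\le\rho$ as required.

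\textbf{Expected obstacles.} There is no substantive difficulty; the entire content is packaged into the diameter estimate of Definition \ref{smallrectangle}(3), which is exactly what the construction of a $(\rho,\beta,\gamma)$-rectangle was designed to deliver. The only minor point to watch is the open-versus-closed neighborhood convention: the argument above produces the bound $d(\cdot,\supp(\mu))\le\rho$, which places the point in the closed $\rho$-neighborhood. This is harmless, since $\rho$ was chosen with strict slack (namely $\rho<\rho_3/2<\rho_1<\varepsilon/2$), so the closed $\rho$-neighborhood is contained in the open $\varepsilon$-neighborhood, which is what item~(iii) of Theorem \ref{maintheorem} ultimately requires.
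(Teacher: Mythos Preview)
Your proof is correct and follows essentially the same approach as the paper: pick $z\in\Lambda^*$, use the $n=0$ factor of the defining intersection to find $x_0\in\Delta_m\cap P\subseteq\supp(\mu)$ with $z$ in the component $C(x_0,R(q)\cap f^{-m}R(q))$, invoke the diameter bound of Definition~\ref{smallrectangle}(3) to get $d(f^j z,f^j x_0)\le\rho$ for $0\le j\le m-1$, and conclude via $f$-invariance of $\supp(\mu)$. The paper's version is terser but identical in substance; your added remark on the open-versus-closed neighborhood and the slack $\rho<\varepsilon/2$ is a welcome clarification.
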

\begin{proof}By the construction of $\Lambda$ and the definition of $R(q)$, for any $y\in\Lambda$, $\exists x\in \Delta_m\cap P$,
$$d(y,f^kx)\leq\rho \text{ for some integer } k\in[0,m-1].$$
Since $x\in \Delta_m\cap P\subseteq\Lambda_H\cap\Omega_\delta\cap\supp(\mu)\subseteq\supp(\mu)$ and $\supp(\mu)$ is $f$-invariant,
$\Lambda$ is contained in the $\rho$-neighborhood of $\supp(\mu)$.
\end{proof}


It follows from Lemma \ref{invariantofcone} and Corollary \ref{corofle} below that there is a dominated splitting, corresponding to Oseledec subspace on horseshoes, and the approximation of Lyapunov exponents by horseshoes.

\begin{lemma}\label{invariantofcone}
 Let $\varepsilon$, $\theta_0$, $\rho$, $m$ and $\Lambda^*$ be as above. For any small $\theta\in(0,\theta_0)$, there exists $0<\eta<1$ such that for every $y\in\Lambda^*$, $j\in\{1,2,\cdots,l-1\}$ and $n\in\mathbb{Z}$,
\begin{equation*}
\begin{aligned}
d_{f^{nm}y}f^m C^{G_j}_{\theta}(f^{nm}y)\subseteq C^{G_j}_{\eta^m\theta}(f^{(n+1)m}y),\\
d_{f^{nm}y}f^{-m} C^{F_j}_{\theta}(f^{nm}y)\subseteq C^{F_j}_{\eta^m\theta}(f^{(n-1)m}y).
\end{aligned}
\end{equation*}
Moreover for any nonzero vectors $v\in C^{G_j}_{\theta}(f^{nm}y)$, $w\in C^{F_j}_{\theta}(f^{nm}y)$,
\begin{equation*}
\begin{aligned}
\|d_{f^{nm}y}f^m(v)\|&\geq e^{[\lambda_{j+1}(\mu)-6\varepsilon]m}\|v\|, \\
\|d_{f^{nm}y}f^{-m}(w)\|&\geq e^{[-\lambda_{j}(\mu)-6\varepsilon]m}\|w\|.
\end{aligned}
\end{equation*}
Furthermore,  for any nonzero vectors $v\in C^{G_{l-1}}_{\theta}(f^{nm}y)$, $w\in C^{F_1}_{\theta}(f^{nm}y)$,
\begin{equation*}
\begin{aligned}
\|d_{f^{nm}y}f^m(v)\|&\leq e^{[\lambda_l(\mu)+6\varepsilon]m}\|v\|, \\
\|d_{f^{nm}y}f^{-m}(w)\|&\leq e^{[-\lambda_{1}(\mu)+6\varepsilon]m}\|w\|.
\end{aligned}
\end{equation*}
\end{lemma}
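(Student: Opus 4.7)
The plan is to exploit the shadowing built into $\Lambda^*$. For $y\in\Lambda^*$ and $n\in\Z$, there is $x_n\in\Delta_m\cap P\subseteq\Omega_\delta\cap\supp(\mu)$ with $f^{nm}y\in C(x_n, R(q)\cap f^{-m}R(q))$, so Definition \ref{smallrectangle}(3) gives $d(f^{nm+k}y,f^kx_n)\leq\rho e^{-\frac{\chi}{2}\min\{k,m-k\}}$ for $0\leq k\leq m$, and by Claim \ref{hyperinne} the $y$-orbit sits inside $V=\mathcal{U}(\tfrac12\varepsilon_0,\supp(\mu))$; hence all of properties (3), (4), (\ref{contoff}), (\ref{contoff-1}) apply. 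Along the reference orbit in $\supp(\mu)$ the splitting $F_j\oplus G_j$ is exactly $df$-invariant, so my strategy is to perform the cone computation there and then transport it to the $y$-orbit with controlled perturbation.

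First I would derive the $m$-step Lyapunov bounds along the $x_n$-orbit. Cutting $[0,m]$ into consecutive blocks of length $L_0$, at least $\lfloor m/L_0\rfloor-2K_\varepsilon$ central blocks are covered by (\ref{lyexoff}), (\ref{lyexoff-1}) and contribute $\|df^m|_{F_j(x_n)}\|\leq e^{m(\lambda_j+2\varepsilon)}$ and $m(df^m|_{G_j(x_n)})\geq e^{m(\lambda_{j+1}-2\varepsilon)}$, together with the outer analogues. The boundary stretches of total length $\leq 2(K_\varepsilon+1)L_0+L_0$ contribute factors bounded by $Q, Q_1, Q_2$; the choice of $N_5$ in (\ref{star}) forces $\log Q,\log Q_1,\log Q_2<\varepsilon m/4$, absorbing these into an extra $e^{\varepsilon m}$ slack and upgrading the exponents to $\lambda_j+3\varepsilon$ and $\lambda_{j+1}-3\varepsilon$. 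Next I would transfer to the $y$-orbit: for unit $v\in C^{G_j}_\theta(f^{nm}y)$, parallel-transport to $\widetilde{v}\in T_{x_n}M$; by property (3) $\widetilde{v}\in C^{G_j}_{\frac{3}{2}\theta}(x_n)$, and decomposing $\widetilde{v}=\widetilde{v}_F+\widetilde{v}_G$ and applying $df^m$ (which preserves the bundles at $x_n$) yields an $F_j$-component of norm $\leq\frac{3}{2}\theta e^{m(\lambda_j+3\varepsilon)}$ and a $G_j$-component of norm $\geq (1-O(\theta))e^{m(\lambda_{j+1}-3\varepsilon)}$. Iterating property (4) once per application of $df$ transfers this to $df^m v$ at $f^{(n+1)m}y$ up to an accumulated error proportional to $m\varepsilon_0\theta$ times the orbit expansion; combining with the aggregate domination ratio $e^{-m(\vartheta-6\varepsilon)}$ produces an $F_j/G_j$ ratio at most $\eta^m\theta$ with $\eta=e^{-(\vartheta-6\varepsilon)/2}$, and $\|df^m v\|\geq e^{m(\lambda_{j+1}-6\varepsilon)}\|v\|$ follows from (\ref{contoff}). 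The upper bound $\|df^m v\|\leq e^{m(\lambda_l+6\varepsilon)}\|v\|$ for $v\in C^{G_{l-1}}_\theta$ comes from the analogous estimate $\|df^m|_{G_{l-1}(x_n)}\|\leq e^{m(\lambda_l+3\varepsilon)}$, and the $f^{-m}$ statements are the time-reversed mirror image, using that $f^{-k}y$ for $0\leq k\leq m$ is shadowed by $f^{m-k}x_{n-1}$ via the same rectangle estimate and the backward analogues of (\ref{lyexoff}), (\ref{lyexoff-1}).

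The main obstacle is that the $y$-orbit does not lie in $\supp(\mu)$, so $df$ does not actually preserve $F_j\oplus G_j$ along it; only property (4) records the near-invariance, and the single-step error $\frac{1}{10}\theta\varepsilon_0 a$ is amplified by subsequent applications of $df$. The resolution rests on the aggregate domination ratio $e^{-m(\vartheta-6\varepsilon)}$ strictly beating this amplified error (thanks to the hypothesis $\vartheta>8\varepsilon$), so the cone opening contracts to $\eta^m\theta$ as claimed; the bookkeeping to verify that the crude tail factors $Q, Q_1, Q_2$ from the first and last $K_\varepsilon L_0$ iterations are absorbed by the lower bound on $N_5$ in (\ref{star}) is the chief technical burden.
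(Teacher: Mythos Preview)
Your plan has a real gap in both parts, and it misses the paper's two main devices.

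For cone invariance you try to extract the $\eta^m$ contraction from the aggregate Lyapunov gap $e^{-m(\vartheta-6\varepsilon)}$ along the shadow orbit, transferring back to the $y$-orbit by iterating property~(4). But property~(4) is stated only for unit vectors already lying in $C^{G_j}_\theta$, so invoking it repeatedly to \emph{prove} cone invariance is circular; and even granting that, the per-step error $\tfrac{1}{10}\theta\varepsilon_0 a$ is amplified by subsequent derivatives, giving a cumulative $F_j$-contribution of order $\theta\varepsilon_0\sum_{k}\|df^{m-k}|_{F_j}\|\cdot\|df^{k}|_{G_j}\|$, which after dividing by the $G_j$-norm stabilizes at a \emph{constant} multiple of $\theta$ rather than $\eta^m\theta$. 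The paper does something much simpler: it proves the single-step inclusion $d_yf\,C^{G_j}_\theta(y)\subset C^{G_j}_{\eta\theta}(fy)$ for every $y\in\Lambda$, with $\eta=37/40$, using property~(4) exactly once together with the domination ratio $\tfrac12$ from Remark~\ref{conds}(1). No Lyapunov exponents enter this step; the $\eta^m$ then follows by iteration.

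For the norm bounds you want to apply (\ref{contoff}) block by block, but (\ref{contoff}) requires $\angle(u,v)<\zeta_0$ at each intermediate point $f^{\iota L_0}y$, and you have not shown that $d_yf^{\iota L_0}(v)$ stays in $C^{G_j}_\theta(f^{\iota L_0}y)$. The paper's resolution is to use the now-established step-by-step cone invariance to build genuinely $df$-invariant subbundles
\[
\widetilde{F}_j(y)=\bigcap_{n\ge0}d_{f^{nm}y}f^{-nm}C^{F_j}_\theta(f^{nm}y),\qquad
\widetilde{G}_j(y)=\bigcap_{n\ge0}d_{f^{-nm}y}f^{nm}C^{G_j}_\theta(f^{-nm}y),
\]
prove that $T_\Lambda M=\widetilde{F}_j\oplus_<\widetilde{G}_j$ is dominated (Lemma~\ref{ds}), and then decompose $v=v^s+v^u$ along $\widetilde{F}_j\oplus\widetilde{G}_j$ rather than along the non-invariant $F_j\oplus G_j$. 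Since $\widetilde{F}_j(f^{\iota L_0}y)\subset C^{F_j}_\theta(f^{\iota L_0}y)$ and $\widetilde{G}_j(f^{\iota L_0}y)\subset C^{G_j}_\theta(f^{\iota L_0}y)$ at every block boundary, the angle hypothesis of (\ref{contoff}) is automatically satisfied and the estimates (\ref{lyexoff}), (\ref{lyexoff-1}) transfer from the shadow orbit to $y$ without any accumulated error. Your parallel-transport scheme is trying to achieve the same thing without these invariant bundles, and the ``chief technical burden'' you identify is precisely the obstacle that the invariant-bundle construction removes.
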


\begin{proof}
We only prove the statements for $G_j$ with respective to $f^m$, since the other statements for $F_j$ with respective to $f^{-m}$ can be proven in a similar fashion.

First of all, we prove the $df^m$-invariance of the cones of $G_j$.
For any $y\in\Lambda$, by Claim \ref{hyperinne}, there is $x\in\supp(\mu)$ such that $d(x,y)<\rho$. Since $x\in\supp(\mu)$, $T_xM=F_j(x)\oplus_<G_j(x)$ is dominated. By Remark \ref{conds}, choosing a approximate norm with $N=1$, for each pair of unit vectors $u_F\in F_j(x)$ and $v_G\in G_j(x)$
\begin{equation}\label{1}
\|d_xf(u_F)\|\leq\frac12\|d_xf(v_G)\|.
\end{equation}
As $d(x,y)<\rho<\rho_0$, applying $(3)$ and $(4)$ we have for every unit vector  $v\in C^{G_j}_{\theta}(y)$,
\[ \widetilde{v}\in C_{\frac32\theta}^{G_j}(x) \text{ and } \|\pi_i(fy)\big(d_yf(v)\big)- \pi_i(fx)\big(d_xf(\widetilde{v})\big)\|\leq\frac1{10}\theta\varepsilon_0 a \]
where $i\in\{F_j,G_j\}$. Therefore
\begin{equation*}
\begin{aligned}
&\|\pi_{F_j}(fy)\big(  d_yf(v)  \big)\|\\
\leq &\| \pi_{F_j}(fy)\big(  d_yf(v)  \big) - \pi_{F_j}(fx)\big(  d_xf(\widetilde{v})  \big)\| + \|\pi_{F_j}(fx)\big(  d_xf(\widetilde{v})  \big)\|\\
\leq & \frac1{10}\theta\varepsilon_0a + \|d_xf(\widetilde{v}_F)\|\\
\leq & \frac1{10}\theta\varepsilon_0a + \frac12 \|d_xf(\widetilde{v}_G)\| \cdot \frac{\|\widetilde{v}_F\|}{\|\widetilde{v}_G\|}\\
\leq & \frac1{10}\theta\varepsilon_0 a + \frac34\theta \|d_xf(\widetilde{v}_G)\|\\
= & \frac1{10}\theta\varepsilon_0 a + \frac34\theta \|\pi_{G_j}(fx)\big(d_xf(\widetilde{v})\big)\|\\
\leq & \frac1{10}\theta\varepsilon_0 a + \frac34\theta \Big(\|\pi_{G_j}(fx)\big(d_xf(\widetilde{v})\big) - \pi_{G_j}(fy)\big(d_yf(v)\big)\|  + \|\pi_{G_j}(fy)\big(d_yf(v)\big) \| \Big)\\
\leq &  \frac1{10}\theta\varepsilon_0a + \frac34\theta \cdot\frac1{10}\theta\varepsilon_0 a + \frac34\theta \|\pi_{G_j}(fy)\big(d_yf(v)\big) \| \\
\leq & \Big( \frac1{10}\varepsilon_0 +   \frac34 \cdot\frac1{10}\theta\varepsilon_0 +  \frac34  \Big)\theta \|\pi_{G_j}(fy)\big(d_yf(v)\big) \| \\
\leq & \eta\theta\cdot \|\pi_{G_j}(fy)\big(d_yf(v)\big) \|,
\end{aligned}
\end{equation*}
where $\widetilde{v}=\widetilde{v}_F+\widetilde{v}_G$, $\widetilde{v}_F\in F_j(x)$, $\widetilde{v}_G\in G_j(x)$ and $\eta=\frac{37}{40}\in(0,1)$. This yields that for any $y\in\Lambda$,
\begin{equation}\label{2}
d_yfC^{G_j}_{\theta}(y)\subseteq C^{G_j}_{\eta\theta}(fy).
\end{equation}
Similarly we can prove $d_yf^{-1}C^{F_j}_{\theta}(y)\subseteq C^{F_j}_{\eta\theta}(f^{-1}y)$ for any $y\in\Lambda$. Then it is obvious that
 \begin{equation*}
\begin{aligned}
d_{f^{nm}y}f^m C^{G_j}_{\theta}(f^{nm}y)\subseteq C^{G_j}_{\eta^m\theta}(f^{(n+1)m}y),\\
d_{f^{nm}y}f^{-m} C^{F_j}_{\theta}(f^{nm}y)\subseteq C^{F_j}_{\eta^m\theta}(f^{(n-1)m}y)
\end{aligned}
\end{equation*}
for any $y\in\Lambda$ and $n\in\mathbb{Z}$. Since $\Lambda^*\subseteq\Lambda$, we complete the proof of the first statement of this lemma.

To prove the second statement of the lemma, we first define the families of sets $\widetilde{F}_j$ and $\widetilde{G}_j$ which are in the cone of $F_j$ and $G_j$ respectively.
For any $y\in\Lambda^*$, let
 \[ \widetilde{F}_j(y) = \cap_{n=0}^{\infty} d_{f^{nm}y}f^{-nm} C_\theta^{F_{j}}(f^{nm}y) \text{ and } \widetilde{G}_j(y) = \cap_{n=0}^{\infty} d_{f^{-nm}y}f^{nm} C_\theta^{G_{j}}(f^{-nm}y)  \]
for $j=1,2,\cdots,l-1$. For any $z\in (f\Lambda^* \cup f^2\Lambda^*\cup\cdots\cup f^{m-1}\Lambda^*)\setminus\Lambda^*$, then there exists $k\in\{1,2,\cdots,m-1\}$ such that
$z\in f^k\Lambda^*$ but $z\notin \Lambda^*\cup\cdots\cup f^{k-1}\Lambda^*$. Define
\[  \widetilde{F}_j(z) = d_{f^{-k}z}f^k \widetilde{F}_j(f^{-k}z) \text{ and } \widetilde{G}_j(z) = d_{f^{-k}z}f^k \widetilde{G}_j(f^{-k}z). \]
Since $\Lambda=\Lambda^*\cup f\Lambda^*\cup\cdots\cup f^{m-1}\Lambda^*$ and $\Lambda^*$ is $f^m$-invariant, it yields that the families of sets $\widetilde{F}_j$ and $ \widetilde{G}_j$ are $df$-invariant. By Lemma \ref{ds} below, the splitting $T_\Lambda M=\widetilde{F}_j\oplus\widetilde{G}_j$ is dominated on $\Lambda$. Therefore the splitting $T_zM=\widetilde{F}_j(z) \oplus \widetilde{G}_j(z)$ is continuous for any $z\in\Lambda$.
From the construction of $\widetilde{F}_j$, $\widetilde{G}_j$ and (\ref{2}) we obtain
\begin{equation}\label{biaohao}
\widetilde{F}_j(z)\subseteq C_\theta^{F_j}(z) \text{ and } \widetilde{G}_j(z)\subseteq C_\theta^{G_j}(z) \text{ for any } z\in\Lambda.
\end{equation}
Since $\theta_0$ is small, $\theta\in(0,\theta_0)$ and the splitting  $T_zM=F_j(z) \oplus G_j(z)$ is continuous for any $z\in\Lambda$, then there exists a constant $\kappa>0$ (independent of $z\in\Lambda$) satisfying $\kappa\theta<\frac78$ such that
for any nonzero vector $v\in C^{G_j}_{\theta}(z)$, we have $\|v^s\|\leq \kappa\theta\|v^u\|$
where $v=v^s+v^u$, $v^s\in \widetilde{F}_j(z)$, $v^u\in \widetilde{G}_j(z)$.
For any $y\in\Lambda^*$, any $n\in\mathbb{Z}$, there exists $x_n\in \Delta_m\cap P$ such that $f^{nm}y\in C(x_n, R(q)\cap f^{-m}R(q))$. We may as well assume $n=0$ and denote $x_0=x$. The proof of $n\neq0$ is parallel to that of $n=0$. By definition \ref{smallrectangle}, $d(f^kx,f^ky)\leq\rho$ for $k=0,1,\cdots,m$.
For any $0\neq v\in C_\theta^{G_j}(y)$, then $v=v^s+v^u$, $v^s\in \widetilde{F}_j(y)$, $v^u\in \widetilde{G}_j(y)$ with $\|v^s\|\leq \kappa\theta\|v^u\|$. Therefore
\begin{equation*}
\begin{aligned}
\|d_yf^m(v^s)\|&\leq\|d_yf^m|_{\widetilde{F}_j(y)}\|\cdot\|v^s\|\\
&\leq\Big(\prod_{\iota=0}^{p-1}\|d_{f^{\iota L_0}y}f^{L_0}|_{\widetilde{F}_j(f^{\iota L_0}y)}\|\Big)\cdot Q_2 e^\varepsilon\cdot\|v^s\|
\end{aligned}
\end{equation*}
where $m=pL_0+q$, $p,q\in\mathbb{N}$ and $0\leq q<L_0$.
Lemma \ref{ds} and Remark \ref{conds} tell us that $\widetilde{F}_j(z)=F_j(z)$ and $\widetilde{G}_j(z)=G_j(z)$ if $z\in\Lambda\cap\supp(\mu)$.
For $\iota=0,1,2,\cdots,p-1$, we have
$ \widetilde{F}_j(f^{\iota L_0}y)\subseteq C_\theta^{F_j}(f^{\iota L_0}y)$.
Since $F_j(f^{\iota L_0}x)\subseteq C_\theta^{F_j}(f^{\iota L_0}x)$ and $m> K_\varepsilon L_0$, combining (\ref{contoff}) and (\ref{lyexoff}) we obtain
\begin{equation}\label{4}
\begin{aligned}
\|d_yf^m(v^s)\|&\leq\Big(\prod_{\iota=0}^{p-1}\|d_{f^{\iota L_0}x}f^{L_0}|_{F_j(f^{\iota L_0}x)}\|\Big)\cdot e^{p\varepsilon}\cdot Q_2 e^\varepsilon\cdot \|v^s\|\\
&\leq e^{pL_0[\lambda_j(\mu)+2\varepsilon]}\cdot e^{pL_0\varepsilon}\cdot Q_2\cdot\|v^s\|\\
&\leq \kappa\theta \cdot Q_2\cdot e^{pL_0[\lambda_j(\mu)+3\varepsilon]}\cdot\|v^u\|
\end{aligned}
\end{equation}
and
\begin{equation*}
\begin{aligned}
\|v^u\|&=\|d_{f^{m}y}f^{-m}\big(d_yf^m(v^u)\big)\|\\
&\leq\|d_{f^my}f^{-m}|_{\widetilde{G}_j(f^my)}\|\cdot\|d_yf^m(v^u)\|\\
&=m(d_yf^m|_{\widetilde{G}_j(y)})^{-1}\cdot\|d_yf^m(v^u)\|\\
&\leq \Big(\prod_{\iota=0}^{p-1}m (d_{f^{\iota L_0}y}f^{L_0}|_{\widetilde{G}_j(f^{\iota L_0}y)})^{-1}\Big)\cdot Q_1 e^\varepsilon\cdot\|d_yf^m(v^u)\|
\end{aligned}
\end{equation*}
\begin{equation*}
\begin{aligned}
&\leq \Big(\prod_{\iota=0}^{p-1}m (d_{f^{\iota L_0}x}f^{L_0}|_{G_j(f^{\iota L_0}x)})^{-1}\Big)\cdot e^{p\varepsilon}\cdot Q_1 e^\varepsilon\cdot\|d_yf^m(v^u)\|\\
&\leq e^{-pL_0[\lambda_{j+1}(\mu)-2\varepsilon]}\cdot e^{pL_0\varepsilon}\cdot Q_1\cdot \|d_yf^m(v^u)\|\\
&= e^{pL_0[-\lambda_{j+1}(\mu)+3\varepsilon]}\cdot Q_1\cdot \|d_yf^m(v^u)\|.
\end{aligned}
\end{equation*}
This implies
\begin{equation*}\label{unequalityofsu}
\begin{aligned}
\|d_yf^m(v^s)\|\leq \kappa\theta Q_1\cdot Q_2\cdot e^{pL_0[\lambda_j(\mu)-\lambda_{j+1}(\mu)+6\varepsilon]}\cdot\|d_yf^m(v^u)\|.
\end{aligned}
\end{equation*}
(\ref{star}) and $m\geq N_5$ tell us that
\[m > \max\Big\{\frac{4\log Q_1}{\varepsilon}, \frac{4\log Q_2}{\varepsilon}, \frac{L_0[\lambda_l(\mu)-3\varepsilon]}{\varepsilon}, \frac{L_0 (\overline{\vartheta}-6\varepsilon)}{\varepsilon}\Big\}.\] It follows that
\begin{equation*}
\begin{aligned}
\frac{\|d_yf^m(v)\|}{\|v\|}&=\frac{\|v^u\|}{\|v\|}\cdot\frac{\|d_yf^m(v)\|}{\|v^u\|}\\
&\geq\frac{\|v^u\|}{\|v\|}\cdot\frac{\|d_yf^m(v^u)\|-\|d_yf^m(v^s)\|}{\|v^u\|}\\
&\geq\frac{1}{1+(\kappa\theta)^2}\cdot\Big[ 1-\kappa\theta Q_1 Q_2\cdot e^{pL_0[\lambda_j(\mu) - \lambda_{j+1}(\mu)+6\varepsilon]}\Big]\cdot\frac{\|d_yf^m(v^u)\|}{\|v^u\|}\\
&\geq\frac{1}{2}\cdot\Big\{1- e^{m[\lambda_j(\mu)-\lambda_{j+1}(\mu)+8\varepsilon]}\Big\}\cdot e^{m[\lambda_{j+1}(\mu)-5\varepsilon]}.
\end{aligned}
\end{equation*}
Since $\lambda_j(\mu)<\lambda_{j+1}(\mu)$ and $\varepsilon>0$ small enough with $-\vartheta+8\varepsilon<0$,
then
\[ e^{m[\lambda_j(\mu)-\lambda_{j+1}(\mu)+8\varepsilon]}\] can be small enough for $m$ large enough.
Thus
\begin{equation*}
\begin{aligned}
\frac{\|d_yf^m(v)\|}{\|v\|}\geq e^{m[\lambda_{j+1}(\mu)-6\varepsilon]}.
\end{aligned}
\end{equation*}
We can prove for any nonzero vector $w\in C^{F_j}_{\theta}(y)$,
\begin{equation*}
\begin{aligned}
\|d_{y}f^{-m}(w)\|&\geq e^{m[-\lambda_{j}(\mu)-6\varepsilon]}\|w\|
\end{aligned}
\end{equation*}
by the same way. Therefore we complete the second statement of the lemma.

Last but not the least, we prove the third statement of the lemma. For every $y\in\Lambda^*$, $0\neq v\in C_\theta^{G_{l-1}}(y)$ with $v=v^s+v^u$, $v^s\in \widetilde{F}_{l-1}(y)$, $v^u\in \widetilde{G}_{l-1}(y)$ with $\|v^s\|\leq \kappa\theta\|v^u\|$. Since $m>\frac{4\log Q_2}{\varepsilon}$, by (\ref{4}), we conclude
\begin{equation*}
\begin{aligned}
\|d_yf^m(v^s)\|&\leq e^{pL_0[\lambda_{l-1}(\mu)+3\varepsilon]}\cdot Q_2 \cdot \|v^s\|\\
&\leq e^{m[\lambda_{l}(\mu)+4\varepsilon]} \|v^s\|.
\end{aligned}
\end{equation*}
By (\ref{contoff}), (\ref{lyexoff-1}) and $m>\frac{4\log Q_1}{\varepsilon}$, we have
\begin{equation*}
\begin{aligned}
\|d_yf^m(v^u)\|&\leq \|d_yf^m|_{\widetilde{G}_{l-1}(y)}\|\cdot\|v^u\|\\
&\leq\prod_{\iota=0}^{p-1} \|d_{f^{\iota L_0}y}f^m|_{\widetilde{G}_{l-1}(f^{\iota L_0}y)}\|\cdot Q_1e^\varepsilon\cdot\|v^u\|
\end{aligned}
\end{equation*}
\begin{equation*}
\begin{aligned}
&\leq\prod_{\iota=0}^{p-1} \|d_{f^{\iota L_0}x}f^m|_{G_{l-1}(f^{\iota L_0}x)}\|\cdot e^{p\varepsilon} Q_1e^\varepsilon\cdot\|v^u\|\\
&\leq e^{pL_0[\lambda_l(\mu)+3\varepsilon]}Q_1\cdot\|v^u\|\\
&\leq e^{m[\lambda_l(\mu)+4\varepsilon]}\cdot\|v^u\|.
\end{aligned}
\end{equation*}
It follows that
\begin{equation*}
\begin{aligned}
\frac{\|d_yf^m(v)\|}{\|v\|} &\leq \frac{\|d_yf^m(v^u)\|+\|d_yf^m(v^s)\|}{\|v\|}\\
&\leq\frac{\|v^u\|}{\|v\|}\cdot \frac{\|d_yf^m(v^u)\|+\|d_yf^m(v^s)\|}{\|v^u\|}\\
&\leq \frac{1}{1-\kappa\theta} \frac{\|d_yf^m(v^u)\|+\|d_yf^m(v^s)\|}{\|v^u\|}\\
&\leq \frac{1}{1-\kappa\theta} (1+\kappa\theta)e^{m[\lambda_l(\mu)+4\varepsilon]}\\
&\leq e^{m[\lambda_l(\mu)+6\varepsilon]},
\end{aligned}
\end{equation*}
the last inequality is because that $m$ is large enough.
Similarly we can also prove
$ \|d_{f^{nm}y}f^{-m}(w)\|\leq e^{[-\lambda_{1}(\mu)+6\varepsilon]m}\|w\|$
for any nonzero vector $w\in C^{F_1}_{\theta}(f^{nm}y)$.
Therefore the proof of Lemma \ref{invariantofcone} is completed.
\end{proof}

\begin{lemma}\label{ds}
The splitting $T_\Lambda M=\widetilde{F}_j\oplus\widetilde{G}_j$ is dominated on $\Lambda$ for $j=1,2,\cdots,l-1$.
\end{lemma}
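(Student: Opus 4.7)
The plan is to derive Lemma \ref{ds} from the strict cone invariance (equation (\ref{2}) and its $F_j$-analogue) established at the beginning of the proof of Lemma \ref{invariantofcone}, combined with the Lyapunov-type estimates (\ref{lyexoff})--(\ref{contoff-1}) applied to the original continuous subbundles $F_j, G_j$. I must be careful \emph{not} to invoke the second or third statements of Lemma \ref{invariantofcone}, since those are proved using the conclusion of Lemma \ref{ds} itself and would create circular reasoning.

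First I would show that
\begin{equation*}
\widetilde{G}_j(y) \;=\; \bigcap_{n\geq 0} d_{f^{-nm}y}f^{nm}\bigl(C_\theta^{G_j}(f^{-nm}y)\bigr)
\end{equation*}
is a linear subspace of dimension $\dim G_j$ depending continuously on $y\in\Lambda$. By cone invariance this is a strictly decreasing family contained in $C_{\eta^{nm}\theta}^{G_j}(y)$, whose angle shrinks to zero as $n\to\infty$. The classical cone-field criterion (of Ma\~n\'e--Newhouse type) then says that such a nested intersection collapses to a linear subspace whose dimension equals the ``core'' dimension of the cone, namely $\dim G_j$; uniform continuity of the cone field over the compact set $\Lambda$ gives continuous dependence on $y$. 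The same argument using $f^{-m}$ handles $\widetilde{F}_j$. Transversality $\widetilde{F}_j(y)\cap\widetilde{G}_j(y)=\{0\}$ is automatic from $\widetilde{F}_j(y)\subseteq C_\theta^{F_j}(y)$ and $\widetilde{G}_j(y)\subseteq C_\theta^{G_j}(y)$ combined with $\theta<1$, and the direct sum $T_yM=\widetilde{F}_j(y)\oplus\widetilde{G}_j(y)$ follows by a dimension count. The $df$-invariance on $\Lambda^*$ is a re-indexing of the defining intersection, and extends to all of $\Lambda$ via the translation formula $\widetilde{G}_j(f^ky):=d_yf^k\widetilde{G}_j(y)$.

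For the domination inequality I would work through the original continuous splitting $F_j\oplus G_j$ (rather than through the tilde bundles, which would require Lemma \ref{ds} to identify them on $\supp(\mu)$). Given $y\in\Lambda^*$, let $x\in\supp(\mu)\cap\Omega_\delta$ be the base point of the admissible $s$-rectangle containing $y$, so that $d(f^kx,f^ky)\leq\rho$ for $0\leq k\leq m$. Taking unit $v\in\widetilde{G}_j(y)\subseteq C_\theta^{G_j}(y)$ and decomposing $v=v_F+v_G$ with $v_F\in F_j(y)$, $v_G\in G_j(y)$, $\|v_F\|\leq\theta\|v_G\|$, and writing $m=pL_0+q$ with $p$ large and $0\leq q<L_0$, I would combine (\ref{lyexoff})--(\ref{lyexoff-1}) at $x$ with the continuity estimate (\ref{contoff}) transferred to the nearby point $y$ to conclude
\begin{equation*}
\|d_yf^m(v_G)\|\geq e^{[\lambda_{j+1}(\mu)-C\varepsilon]m}\|v_G\|,\qquad \|d_yf^m(v_F)\|\leq e^{[\lambda_j(\mu)+C\varepsilon]m}\|v_F\|
\end{equation*}
for a uniform constant $C>0$. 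Since $m$ is chosen large in (\ref{star}) and $\overline\vartheta-6\varepsilon>0$, the $v_F$-contribution is negligible, so $\|d_yf^m(v)\|\geq\tfrac12 e^{[\lambda_{j+1}(\mu)-C\varepsilon]m}\|v\|$. A parallel argument for unit $u\in\widetilde{F}_j(y)$ yields $\|d_yf^m(u)\|\leq 2e^{[\lambda_j(\mu)+C\varepsilon]m}\|u\|$, whence
\begin{equation*}
\frac{\|d_yf^m(u)\|}{\|d_yf^m(v)\|}\leq 4\, e^{[\lambda_j(\mu)-\lambda_{j+1}(\mu)+2C\varepsilon]m}\leq\tfrac12
\end{equation*}
for $m$ large, which is domination with $N=m$ on $\Lambda^*$. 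A finite number of additional applications of $df^k$ ($0\leq k<m$) transfers this bound to all of $\Lambda$, possibly replacing $N$ by an appropriate multiple of $m$.

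The main obstacle is Step 1: establishing that the decreasing intersection of the cone images is genuinely a subspace of dimension exactly $\dim G_j$, rather than a larger cone-shaped set or a strictly smaller subspace. This is the delicate geometric heart of the argument and relies on strict cone-angle contraction ($\eta<1$) together with the continuity of the cone field over the compact set $\Lambda$; it is the standard ``invariant cone field yields dominated splitting'' theorem, but verifying cleanly that our cone families meet its hypotheses accounts for the bulk of the work. Once Step 1 is secured, the transversality, invariance, and domination statements are essentially bookkeeping layered on top of the cone invariance and the preliminary Lyapunov estimates already at our disposal.
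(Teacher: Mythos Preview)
Your overall plan is sound, but it diverges substantially from the paper's and there is a gap in the domination step as you have written it.

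\textbf{The paper's approach.} The paper does things in the opposite order. It first proves the domination inequality, and with $N=1$ rather than $N=m$: for any $z\in\Lambda$ pick $x\in\supp\mu$ with $d(x,z)<\rho$, use that $F_j\oplus_<G_j$ is already dominated with $N=1$ on $\supp\mu$ (equation~(\ref{1}) after the metric change of Remark~\ref{conds}), and then use the perturbation properties~(3)--(4) together with (\ref{biaohao}) to transfer this to $z$, yielding $\|d_zf(u)\|\le\tfrac34\|d_zf(v)\|$ for unit $u\in\widetilde F_j(z)$, $v\in\widetilde G_j(z)$. No Lyapunov-block estimates of the form (\ref{lyexoff})--(\ref{contoff-1}) are needed at all. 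Only \emph{after} this one-step domination is in hand does the paper show $\widetilde F_j(z),\widetilde G_j(z)$ are genuine subspaces: it observes they contain subspaces $\overline F_j,\overline G_j$ of the right dimensions, and then uses the just-proved $N=1$ inequality iteratively to force any $w\in\widetilde F_j(z)$ to have vanishing $\overline G_j$-component. Your route (subspaces first via the cone criterion, domination second via $f^m$) is valid in principle but considerably longer; the paper's one-step perturbation argument is the natural shortcut here.

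\textbf{The gap.} In your domination argument you decompose a unit $v\in\widetilde G_j(y)$ as $v=v_F+v_G$ along the \emph{extended} (non-invariant) splitting $F_j(y)\oplus G_j(y)$ and then assert $\|d_yf^m(v_F)\|\le e^{[\lambda_j(\mu)+C\varepsilon]m}\|v_F\|$. This step is not justified: away from $\supp\mu$ the bundle $F_j$ is not $df$-invariant, so after even one block of length $L_0$ the vector $d_yf^{L_0}(v_F)$ need not lie in (or near) $C_\theta^{F_j}(f^{L_0}y)$, and the continuity estimate~(\ref{contoff}) only compares norms of vectors at small angle---it gives no directional control allowing you to iterate. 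The forward-invariant cone is $C_\theta^{G_j}$, not $C_\theta^{F_j}$. The clean fix is to drop the decomposition entirely: for $v\in\widetilde G_j(y)\subseteq C_\theta^{G_j}(y)$ use forward invariance (\ref{2}) to keep $d_yf^{iL_0}(v)\in C_\theta^{G_j}(f^{iL_0}y)$ and apply (\ref{contoff}) blockwise against $m(d_{f^{iL_0}x}f^{L_0}|_{G_j})$; for $u\in\widetilde F_j(y)$ use the already-established $df$-invariance of $\widetilde F_j$ together with (\ref{biaohao}) to keep $d_yf^{iL_0}(u)\in C_\theta^{F_j}(f^{iL_0}y)$ and bound from above by $\|d_{f^{iL_0}x}f^{L_0}|_{F_j}\|$. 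Either this repair or, more simply, the paper's $N=1$ perturbation argument closes the proof.
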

\begin{proof}
For any $z\in\Lambda$, there is $x\in\supp(\mu)$ such that $d(x,z)<\rho$. Since $x\in\supp(\mu)$, $T_xM=F_j(x)\oplus_<G_j(x)$ is dominated. By (\ref{1})
for small $\theta\in(0,\theta_0)$, each pair of unit vectors $\overline{u}\in C_{\frac32\theta}^{F_j}(x)$ and $\overline{v}\in C_{\frac32\theta}^{G_j}(x)$,
\begin{equation*}
 \|d_x f(\overline{u})\|\leq\frac58\|d_x f(\overline{v})\|.
\end{equation*}
For any unit vectors $u\in\widetilde{F}_j(z)$ and $v\in\widetilde{G}_j(z)$, from (\ref{biaohao}) we obtain
\[ u\in\widetilde{F}_j(z)\subseteq C_\theta^{F_j}(z) \text{ and } v\in\widetilde{G}_j(z)\subseteq C_\theta^{G_j}(z). \]
Since $d(z,x)<\rho$, by $(3)$, we have $\widetilde{u}\in C_{\frac32\theta}^{F_j}(x)$ and $\widetilde{v}\in C_{\frac32\theta}^{G_j}(x)$.
Combining $(4)$, we conclude
\[ \frac{\|d_zf(u)\|}{\|d_zf(v)\|}\leq \frac{\|d_xf(\widetilde{u})\|+\frac15\theta\varepsilon_0a}{\|d_xf(\widetilde{v})\|-\frac15\theta\varepsilon_0a}. \]
Since $\varepsilon_0$ is small, it yields that
\begin{equation}\label{star2}
\frac{\|d_zf(u)\|}{\|d_zf(v)\|}\leq\frac34.
\end{equation}

By construction of $\widetilde{F}_j(z), \widetilde{G}_j(z)$ and the continuity of the decomposition $T_zM=F_j(z)\oplus G_j(z)$, it follows that $\widetilde{F}_j(z)$ and $\widetilde{G}_j(z)$
contain two subspaces $\overline{F}_j(z)$ and $\overline{G}_j(z)$, respectively of the same dimension as that of $F_j(z)$ and $G_j(z)$.
Since $\widetilde{F}_j(z)\cap\widetilde{G}_j(z)=\{0\}$, $\overline{F}_j(z)\cap\overline{G}_j(z)=\{0\}$. Therefore $T_zM=\overline{F}_j(z)\oplus\overline{G}_j(z)$. We claim that $\overline{F}_j(z)=\widetilde{F}_j(z)$. In fact,
let $w\in\widetilde{F}_j(z)$ with $w=w_{\overline{F}_j}+w_{\overline{G}_j}$, $w_{\overline{F}_j}\in\overline{F}_j(z)$, $w_{\overline{G}_j}\in\overline{G}_j(z)$ and $\|w\|=1$. If $w_{\overline{G}_j}\neq0$, by (\ref{star2}), we have for any $n\geq 1$
\[ \frac{\|d_zf^{n}(w_{\overline{F}_j})\|}{\|w_{\overline{F}_j}\|}\leq(\frac34)^{n}\frac{\|d_zf^{n}(w_{\overline{G}_j})\|}{\|w_{\overline{G}_j}\|} \text{ and }
\|d_zf^{n}(w)\|\leq(\frac34)^{n}\frac{\|d_zf^{n}(w_{\overline{G}_j})\|}{\|w_{\overline{G}_j}\|}, \]
because $w_{\overline{F}_j}\in\overline{F}_j(z)\subseteq\widetilde{F}_j(z)$, $w\in\widetilde{F}_j(z)$ and $w_{\overline{G}_j}\in\overline{G}_j(z)\subseteq\widetilde{G}_j(z)$. Therefore
\begin{eqnarray*}
\begin{aligned}
\|w_{\overline{G}_j}\|&\leq(\frac34)^n\frac{\|d_zf^n(w_{\overline{G}_j})\|}{\|d_zf^n(w)\|}\\
&\leq(\frac34)^n\frac{\|d_zf^n(w_{\overline{G}_j})\|}{\|d_zf^n(w_{\overline{G}_j})\|-\|d_zf^n(w_{\overline{F}_j})\|}\\
&\leq(\frac34)^n\frac{1}{1 - \frac{\|d_zf^n(w_{\overline{F}_j})\|}{\|d_zf^n(w_{\overline{G}_j})\|}}\\
&\leq(\frac34)^n\frac{1}{1 - (\frac{3}{4})^n\frac{\|w_{\overline{F}_j}\|}{\|w_{\overline{G}_j}\|}}.
\end{aligned}
\end{eqnarray*}
Let $n\to\infty$, it follows that $w_{\overline{G}_j}=0$. Thus $\overline{F}_j(z)=\widetilde{F}_j(z)$.
Similarly we have $\overline{G}_j(z)=\widetilde{G}_j(z)$.
Therefore the splitting $T_{\Lambda}M=\widetilde{F}_j\oplus\widetilde{G}_j$ is dominated.
\end{proof}

The following result is extended Lemma \ref{invariantofcone} to $\Lambda$.
\begin{corollary}\label{corofle}
 Let $\varepsilon$, $\theta_0$, $\rho$, $m$ and $\Lambda$ be as above. For any small $\theta\in(0,\theta_0)$, there exists $0<\eta<1$ such that for every $z\in\Lambda$, $j\in\{1,2,\cdots,l-1\}$ and $n\in\mathbb{Z}$,
\begin{equation*}
\begin{aligned}
d_{f^{nm}z}f^m C^{G_j}_{\theta}(f^{nm}z)\subseteq C^{G_j}_{\eta^m\theta}(f^{(n+1)m}z),\\
d_{f^{nm}z}f^{-m} C^{F_j}_{\theta}(f^{nm}z)\subseteq C^{F_j}_{\eta^m\theta}(f^{(n-1)m}z).
\end{aligned}
\end{equation*}
Moreover for any nonzero vectors $v\in C^{G_j}_{\theta}(f^{nm}z)$, $w\in C^{F_j}_{\theta}(f^{nm}z)$,
\begin{equation*}
\begin{aligned}
\|d_{f^{nm}z}f^m(v)\|&\geq e^{[\lambda_{j+1}(\mu)-6\varepsilon]m}\|v\|, \\
\|d_{f^{nm}z}f^{-m}(w)\|&\geq e^{[-\lambda_{j}(\mu)-6\varepsilon]m}\|w\|.
\end{aligned}
\end{equation*}
Furthermore, for any nonzero vectors $v\in C^{G_{l-1}}_{\theta}(f^{nm}z)$, $w\in C^{F_1}_{\theta}(f^{nm}z)$,
\begin{equation*}
\begin{aligned}
\|d_{f^{nm}z}f^m(v)\|&\leq e^{[\lambda_l(\mu)+6\varepsilon]m}\|v\|, \\
\|d_{f^{nm}z}f^{-m}(w)\|&\leq e^{[-\lambda_{1}(\mu)+6\varepsilon]m}\|w\|.
\end{aligned}
\end{equation*}
\end{corollary}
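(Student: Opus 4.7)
The plan is to write any $z\in\Lambda$ as $z=f^k y$ with $y\in\Lambda^*$ and $k\in\{0,1,\dots,m-1\}$. When $k=0$ the corollary reduces to Lemma~\ref{invariantofcone}, so I would assume $k\in\{1,\dots,m-1\}$. Inspecting the first part of the proof of Lemma~\ref{invariantofcone}, the single-iterate cone invariance $d_wf\bigl(C^{G_j}_\theta(w)\bigr)\subseteq C^{G_j}_{\eta\theta}(fw)$ (and the analogous statement for $C^{F_j}$ under $f^{-1}$) was in fact established for every $w\in\Lambda$, since the only input was $\Lambda\subseteq\mathcal{U}(\rho,\supp(\mu))$ (Claim~\ref{hyperinne}) together with the domination on $\supp(\mu)$. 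Iterating this inclusion $m$ times gives the cone invariance statements of the corollary with the same constant $\eta$.

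For the two expansion statements I would replay the computation of Lemma~\ref{invariantofcone} with $z$ in place of $y$. Using Lemma~\ref{ds}, the dominated splitting $T_\Lambda M=\widetilde{F}_j\oplus\widetilde{G}_j$ and the continuity constant $\kappa$ of that splitting are available on all of $\Lambda$, so for $v\in C^{G_j}_\theta(f^{nm}z)$ one decomposes $v=v^s+v^u$ with $v^s\in\widetilde{F}_j(f^{nm}z)$, $v^u\in\widetilde{G}_j(f^{nm}z)$ and $\|v^s\|\leq\kappa\theta\|v^u\|$, exactly as before. The key input, orbit shadowing, is what changes. Writing $f^{nm}z=f^{nm+k}y$, the rectangle property applied to $f^{nm}y,f^{(n+1)m}y\in\Lambda^*$ furnishes points $x_n,x_{n+1}\in\Delta_m\cap P\subseteq\Omega_\delta$ with
\begin{equation*}
d(f^{nm+k+i}y,\,f^{k+i}x_n)\le\rho\ \text{for}\ 0\leq i\leq m-k,\qquad d(f^{nm+k+i}y,\,f^{i-(m-k)}x_{n+1})\le\rho\ \text{for}\ m-k<i\leq m.
\end{equation*}
So the $f$-orbit of $f^{nm}z$ for $m$ steps is $\rho$-shadowed by the concatenation of the tail of $x_n$'s orbit with the head of $x_{n+1}$'s orbit.

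I would exploit the shadowing via the splitting $d_{f^{nm+k}y}f^m=d_{f^{(n+1)m}y}f^k\circ d_{f^{nm+k}y}f^{m-k}$. The second factor is estimated using continuity (\ref{contoff})-(\ref{contoff-1}) to transfer to $x_n$'s orbit and then Birkhoff (\ref{lyexoff})-(\ref{lyexoff-1}); the first factor is estimated analogously using $x_{n+1}$'s orbit. The factor $d_{f^{(n+1)m}y}f^k$ is comfortable because the shadowing orbit starts exactly at $x_{n+1}$, so the $L_0$-block Birkhoff bounds apply directly starting from iterate $\iota=0$, with a short remainder bounded by $Q,Q_1,Q_2$. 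With both factors controlled, and the short-iterate contributions and the $L_0$-block remainder at the gluing point $i=m-k$ absorbed into the $O(\varepsilon m)$ correction by the choice of $N_5$ in (\ref{star}), the same computation as at the end of the proof of Lemma~\ref{invariantofcone} yields
\begin{equation*}
\|d_{f^{nm}z}f^m(v)\|\geq e^{[\lambda_{j+1}(\mu)-6\varepsilon]m}\|v\|,
\end{equation*}
and the analogous upper bound for $v\in C^{G_{l-1}}_\theta(f^{nm}z)$ is obtained by the same modification of the third part of Lemma~\ref{invariantofcone}'s proof. The statements for $C^{F_j}$ and $f^{-m}$ follow by the symmetric argument.

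The main obstacle is controlling the factor $\|d_{f^{nm+k}y}f^{m-k}|_{\widetilde{F}_j}\|$: the Birkhoff estimates (\ref{lyexoff})-(\ref{lyexoff-1}) are uniform only on $\Omega_\delta$, whereas transferring to $x_n$'s orbit produces a product of $L_0$-block norms starting at the shift $k$ rather than at $x_n$ itself. I would handle this either by redoing Egorov to obtain the uniform convergence on a shift-invariant subset (at the cost of a slightly smaller $\mu$-measure), or by observing directly that the only discrepancy between the shifted and the standard Birkhoff product is a short boundary of length $<L_0$ whose contribution is dominated by $Q,Q_1,Q_2$ and is absorbed into the $O(\varepsilon m)$ exponent by the enormous size of $m\geq N_5$.
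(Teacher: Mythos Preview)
Your strategy coincides with the paper's: both write $z=f^ky$ with $y\in\Lambda^*$, note that the single-step cone inclusion was already established on all of $\Lambda$, decompose $d_{f^{nm}z}f^m$ at $f^{(n+1)m}y$, shadow the two pieces by $x_n\in\Delta_m\cap P$ (tail) and $x_{n+1}\in\Delta_m\cap P$ (head), and then argue case by case according to whether each piece has length $\geq K_\varepsilon L_0$---using \eqref{lyexoff}--\eqref{lyexoff-1} when it does and the constants $Q,Q_1,Q_2$ when it does not. The paper carries this out explicitly as Cases I--III via the chain of estimates \eqref{q}--\eqref{p}.

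Concerning the obstacle you single out: the paper does not treat it as an obstacle at all. In \eqref{a}--\eqref{d} it simply invokes \eqref{lyexoff}--\eqref{lyexoff-1} with the base point $f^kx_0$ in place of $x_0$, without commenting that $f^kx_0$ need not lie in $\Omega_\delta$. So your concern is legitimate, but be aware that your second proposed fix is not correct: the shift $k$ ranges over $\{0,\dots,m-1\}$ and can be of order $m$, so the discrepancy between the shifted and the standard Birkhoff product is an entire initial segment of length $\approx k$, not a boundary of length $<L_0$. Your first idea is closer to a genuine repair, but ``shift-invariant Egorov set'' cannot be taken literally (by ergodicity any $f$-invariant set of positive measure has full measure). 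A clean resolution is to record, in the Egorov step, the \emph{two-sided} bound $\lambda_j-\varepsilon<\frac{1}{kL_0}\sum_{\iota=0}^{k-1}\log\|d_{f^{\iota L_0}x}f^{L_0}|_{F_j}\|<\lambda_j+2\varepsilon$ for $x\in\Omega_\delta$ and $k\geq K_\varepsilon$ (both directions are available from Lemmas~\ref{malyexoffN}--\ref{milyexoffN}); a partial product from index $a\approx k/L_0$ to $a+p_1-1$ is then the quotient of two full products and is bounded by $e^{p_1L_0(\lambda_j+2\varepsilon)+3a L_0\varepsilon}$, and the extra $O(\varepsilon m)$ is absorbed by the final $6\varepsilon$ in the exponent.
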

\begin{proof}
We only prove the statements for $G_j$ with respective to $f^m$, since the other statements for $F_j$ with respective to $f^{-m}$ can be proven in a similar fashion.
The proof of the first statement is identical to that of Lemma \ref{invariantofcone}. It suffices to prove for any
$0\neq v\in C^{G_j}_{\theta}(f^{nm}z)$, $j=1,2,\cdots,l-1$,
\[ \|d_{f^{nm}z}f^m(v)\|\geq e^{[\lambda_{j+1}(\mu)-6\varepsilon]m}\|v\|\]
and for any $0\neq v\in C^{G_{l-1}}_{\theta}(f^{nm}z)$,
\[ \|d_{f^{nm}z}f^m(v)\|\leq e^{[\lambda_l(\mu)+6\varepsilon]m}\|v\|. \]

For every $z\in\Lambda$, there exist $y\in\Lambda^*$ and $k\in\{0,1,2,\cdots,m-1\}$ such that $z=f^ky$. As $y\in\Lambda^*$, for any $n\geq 0$ there
is $x_n\in\Delta_m\cap P$ such that
\[d(f^Kx_n,f^K(f^{nm}y))\leq\rho, \text{ for }K=0,1,2,\cdots,m.\]
We may as well assume $n=0$. The proof of $n\neq0$ is parallel to that of $n=0$.
For any $0\neq v\in C_\theta^{G_j}(z)$, then $v=v^s+v^u$ with $v^s\in\widetilde{F}_j(z)$, $v^u\in\widetilde{G}_j(z)$ and $\|v^s\|\leq\kappa\theta\|v^u\|$.
Thus
\begin{equation}\label{q}
\begin{aligned}
\|d_zf^m(v^s)\|&=\|d_{f^{m-k}z}f^k\Big( d_zf^{m-k}(v^s)\Big)\|\\
&\leq \|d_{f^{m-k}z}f^k|_{\widetilde{F}_j(f^{m-k}z)}\|\cdot \|d_zf^{m-k}|_{\widetilde{F}_j(z)}\| \cdot \|v^s\|
\end{aligned}
\end{equation}
and
\begin{equation}\label{r}
\begin{aligned}
\|v^u\|&\leq m(d_zf^m|_{\widetilde{G}_j(z)})^{-1}\cdot \|d_zf^m(v^u)\|\\
&\leq m(d_zf^{m-k}|_{\widetilde{G}_j(z)})^{-1}\cdot m(d_{f^{m-k}z}f^k|_{\widetilde{G}_j(f^{m-k}z)})^{-1}\cdot  \|d_zf^m(v^u)\|.
\end{aligned}
\end{equation}
It follows from (\ref{star}) and $m\geq N_5$ that $\max\{m-k,k\}\geq K_\varepsilon L_0$.
Let $m-k=p_1L_0+q_1$ and $k=p_2L_0+q_2$ where $p_i, q_i\in\mathbb{N}$, $0\leq q_i<L_0$ and $i=1,2$.
If $m-k\geq K_\varepsilon L_0$, applying (\ref{contoff}) and (\ref{lyexoff}), then we have $p_1\geq K_\varepsilon$,
\begin{equation}\label{a}
\begin{aligned}
\|d_zf^{m-k}|_{\widetilde{F}_j(z)}\|&\leq \prod_{\iota=0}^{p_1-1}\|d_{f^{\iota L_0}z}f^{L_0}|_{\widetilde{F}_j(f^{\iota L_0}z)}\|\cdot Q_2 e^\varepsilon\\
&\leq \Big(\prod_{\iota=0}^{p_1-1}\|d_{f^{\iota L_0}(f^kx_0)}f^{L_0}|_{F_j(f^{\iota L_0}(f^kx_0))}\|\Big) e^{p_1\varepsilon}\cdot Q_2 e^\varepsilon \\
&\leq e^{p_1L_0[\lambda_j(\mu)+2\varepsilon]}\cdot e^{p_1L_0\varepsilon}\cdot Q_2  \\
&\leq e^{p_1L_0[\lambda_j(\mu)+3\varepsilon]}\cdot Q_2
\end{aligned}
\end{equation}
and
\begin{equation}\label{b}
\begin{aligned}
m(d_zf^{m-k}|_{\widetilde{G}_j(z)})^{-1}&\leq \prod_{\iota=0}^{p_1-1} m(d_{f^{\iota L_0}z}f^{L_0}|_{\widetilde{G}_j(f^{\iota L_0}z)})^{-1}\cdot Q_1 e^\varepsilon\\
&\leq \prod_{\iota=0}^{p_1-1} m(d_{f^{\iota L_0}x_0}f^{L_0}|_{G_j(f^{\iota L_0}x_0)})^{-1}\cdot e^{p_1\varepsilon}\cdot Q_1 e^\varepsilon  \\
&\leq e^{p_1L_0[-\lambda_{j+1}(\mu)+3\varepsilon]}\cdot Q_1.
\end{aligned}
\end{equation}
If $m-k\geq K_\varepsilon L_0$, by (\ref{contoff}) and (\ref{lyexoff-1}), we obtain
\begin{equation}\label{c}
\begin{aligned}
m(d_zf^{m-k}|_{\widetilde{F}_1(z)}) &\geq \prod_{\iota=0}^{p_1-1} m(d_{f^{\iota L_0}z}f^{L_0}|_{\widetilde{F}_1(f^{\iota L_0}z)}) \cdot Q_2^{-1} e^{-\varepsilon}\\
&\geq \Big(\prod_{\iota=0}^{p_1-1} m( d_{f^{\iota L_0}(f^kx_0)}f^{L_0}|_{F_1(f^{\iota L_0}(f^kx_0))})\Big) e^{-p_1\varepsilon}\cdot Q_2^{-1} e^{-\varepsilon}  \\
&\geq e^{p_1L_0[\lambda_1(\mu)-2\varepsilon]}\cdot e^{-p_1L_0\varepsilon}\cdot Q_2^{-1}  \\
&\geq e^{p_1L_0[\lambda_1(\mu)-3\varepsilon]}\cdot Q_2^{-1}
\end{aligned}
\end{equation}
and
\begin{equation}\label{d}
\begin{aligned}
\|d_zf^{m-k}|_{\widetilde{G}_{l-1}(z)}\| &\leq \prod_{\iota=0}^{p_1-1} \|d_{f^{\iota L_0}z}f^{L_0}|_{\widetilde{G}_{l-1}(f^{\iota L_0}z)}\| \cdot Q_1 e^\varepsilon\\
&\leq \prod_{\iota=0}^{p_1-1} \|d_{f^{\iota L_0}x_0}f^{L_0}|_{G_{l-1}(f^{\iota L_0}x_0)}\| \cdot e^{p_1\varepsilon}\cdot Q_1 e^\varepsilon \\
&\leq e^{p_1L_0[\lambda_{l}(\mu)+3\varepsilon]}\cdot Q_1.
\end{aligned}
\end{equation}
If $m-k<K_\varepsilon L_0$, combining (\ref{contoff}) and the definition of $Q$, then we have $p_1<K_\varepsilon$,
\begin{equation}\label{e}
\begin{aligned}
\|d_zf^{m-k}|_{\widetilde{F}_j(z)}\| &\leq Q\cdot Q_2\cdot e^{p_1L_0\varepsilon},
\end{aligned}
\end{equation}
\begin{equation}\label{f}
\begin{aligned}
m(d_zf^{m-k}|_{\widetilde{G}_j(z)})^{-1}& \leq Q\cdot Q_1\cdot e^{p_1L_0\varepsilon},
\end{aligned}
\end{equation}
\begin{equation}\label{g}
\begin{aligned}
m(d_zf^{m-k}|_{\widetilde{F}_1(z)}) &\geq Q^{-1}\cdot Q_2^{-1}\cdot e^{-p_1L_0\varepsilon}
\end{aligned}
\end{equation}
and
\begin{equation}\label{h}
\begin{aligned}
\|d_zf^{m-k}|_{\widetilde{G}_{l-1}(z)}\|& \leq Q\cdot Q_1\cdot e^{p_1L_0\varepsilon}.
\end{aligned}
\end{equation}
Similarly we obtain that if $k\geq K_\varepsilon L_0$, then $p_2\geq K_\varepsilon$ and
\begin{equation}\label{i}
\begin{aligned}
\|d_{f^{m-k}z}f^k|_{\widetilde{F}_j(f^{m-k}z)}\|&\leq e^{p_2L_0[\lambda_j(\mu)+3\varepsilon]}\cdot Q_2,
\end{aligned}
\end{equation}
\begin{equation}\label{j}
\begin{aligned}
m(d_{f^{m-k}z}f^k|_{\widetilde{G}_j(f^{m-k}z)})^{-1}&\leq e^{p_2L_0[-\lambda_{j+1}(\mu)+3\varepsilon]}\cdot Q_1,
\end{aligned}
\end{equation}
\begin{equation}\label{k}
\begin{aligned}
m(d_{f^{m-k}z}f^k|_{\widetilde{F}_1(f^{m-k}z)}) &\geq e^{p_2L_0[\lambda_1(\mu)-3\varepsilon]}\cdot Q_2^{-1}
\end{aligned}
\end{equation}
and
\begin{equation}\label{l}
\begin{aligned}
\|d_{f^{m-k}z}f^k|_{\widetilde{G}_{l-1}(f^{m-k}z)}\| &\leq e^{p_2L_0[\lambda_{l}(\mu)+3\varepsilon]}\cdot Q_1;
\end{aligned}
\end{equation}
if $k<K_\varepsilon L_0$, then $p_2< K_\varepsilon$,
\begin{equation}\label{m}
\begin{aligned}
\|d_{f^{m-k}z}f^k|_{\widetilde{F}_j(f^{m-k}z)}\|&\leq  Q\cdot Q_2\cdot e^{p_2L_0\varepsilon},
\end{aligned}
\end{equation}
\begin{equation}\label{n}
\begin{aligned}
m(d_{f^{m-k}z}f^k|_{\widetilde{G}_j(f^{m-k}z)})^{-1}&\leq Q\cdot Q_1\cdot e^{p_2L_0\varepsilon},
\end{aligned}
\end{equation}
\begin{equation}\label{o}
\begin{aligned}
m(d_{f^{m-k}z}f^k|_{\widetilde{F}_1(f^{m-k}z)})&\geq  Q^{-1}\cdot Q_2^{-1}\cdot e^{-p_2L_0\varepsilon}
\end{aligned}
\end{equation}
and
\begin{equation}\label{p}
\begin{aligned}
\|d_{f^{m-k}z}f^k|_{\widetilde{G}_{l-1}(f^{m-k}z)}\| &\leq Q\cdot Q_1\cdot e^{p_2L_0\varepsilon}.
\end{aligned}
\end{equation}
Then the proof of the second statement can be divided into three cases:

{\bf Case I:} If $m-k\geq K_\varepsilon L_0$ and $k\geq K_\varepsilon L_0$, then from (\ref{q}), (\ref{a}) and (\ref{i}) we have
\[ \|d_zf^m(v^s)\|\leq \kappa\theta Q_2^2\cdot e^{(p_1+p_2)L_0[\lambda_j(\mu)+3\varepsilon]} \cdot\|v^u\|. \]
By (\ref{r}), (\ref{b}) and (\ref{j}) we obtain
\[ \|v^u\|\leq Q_1^2\cdot e^{(p_1+p_2)L_0[-\lambda_{j+1}(\mu)+3\varepsilon]}\cdot \|d_zf^m(v^u)\|. \]
Thus
\[\|d_zf^m(v^s)\|\leq \kappa\theta Q_1^2 Q_2^2 \cdot e^{(p_1+p_2)L_0[\lambda_j(\mu)-\lambda_{j+1}(\mu)+6\varepsilon]} \cdot \|d_zf^m(v^u)\|.  \]
Therefore
\[\frac{\|d_zf^m(v)\|}{\|v\|}\geq  \frac{1}{1+\kappa^2\theta^2} \Big\{1-\kappa\theta Q_1^2 Q_2^2 e^{(p_1+p_2)L_0[\lambda_j(\mu)-\lambda_{j+1}(\mu)+6\varepsilon]}\Big\} \cdot \frac{\|d_zf^m(v^u)\|}{\|v^u\|}.             \]
It follows from $(\ref{star})$ and $m\geq N_5$ that
\[ m>\max\{\frac{4\log Q_1}{\varepsilon}, \frac{4\log Q_2}{\varepsilon}, \frac{2L_0[\lambda_l(\mu)-3\varepsilon]}{\varepsilon}, \frac{2L_0(\overline{\vartheta}-6\varepsilon)}{\varepsilon}\}.\]
Then we conclude
\[\frac{\|d_zf^m(v)\|}{\|v\|}\geq  \frac{1}{1+\kappa^2\theta^2} \Big\{1-\kappa\theta e^{m[\lambda_j(\mu)-\lambda_{j+1}(\mu)+8\varepsilon]}\Big\} \cdot e^{m[\lambda_{j+1}(\mu)-5\varepsilon]}.\]
As $m$ is large enough, it yields that
\[\frac{\|d_zf^m(v)\|}{\|v\|}\geq e^{m[\lambda_{j+1}(\mu)-6\varepsilon]}.\]

{\bf Case II:} If $m-k\geq K_\varepsilon L_0$ and $k<K_\varepsilon L_0$, by $(\ref{q})$, $(\ref{a})$ and $(\ref{m})$, we have
\[ \|d_zf^m(v^s)\|\leq \kappa\theta Q Q_2^2\cdot e^{p_1L_0[\lambda_j(\mu)+3\varepsilon]+p_2L_0\varepsilon} \cdot\|v^u\|. \]
By $(\ref{r})$, $(\ref{b})$ and $(\ref{n})$, we conclude that
\[ \|v^u\|\leq Q Q_1^2\cdot e^{p_1L_0[-\lambda_{j+1}(\mu)+3\varepsilon]+p_2L_0\varepsilon}\cdot \|d_zf^m(v^u)\|.
\]
Thus
\[\|d_zf^m(v^s)\|\leq \kappa\theta Q^2 Q_1^2 Q_2^2 \cdot e^{p_1L_0[\lambda_j(\mu)-\lambda_{j+1}(\mu)+6\varepsilon]+m\varepsilon} \cdot \|d_zf^m(v^u)\|.  \]
Therefore
\[\frac{\|d_zf^m(v)\|}{\|v\|}\geq  \frac{1}{1+\kappa^2\theta^2} \Big\{1-\kappa\theta Q^2 Q_1^2 Q_2^2 e^{p_1L_0[\lambda_j(\mu)-\lambda_{j+1}(\mu)+6\varepsilon]+m\varepsilon}\Big\} \cdot \frac{\|d_zf^m(v^u)\|}{\|v^u\|}.             \]
Since
$m\geq N_5>\max\{\frac{4\log Q}{\varepsilon}, \frac{4\log Q_1}{\varepsilon}, \frac{4\log Q_2}{\varepsilon}, \frac{2(K_\varepsilon+1)L_0(\overline{\vartheta}-6\varepsilon)}{\varepsilon}, \frac{4(K_\varepsilon+1)L_0[\lambda_l(\mu)-3\varepsilon]}{\varepsilon}\}$, we obtain
\[\frac{\|d_zf^m(v)\|}{\|v\|}\geq  \frac{1}{1+\kappa^2\theta^2} \Big\{1-\kappa\theta e^{m[\lambda_j(\mu)-\lambda_{j+1}(\mu)+9\varepsilon]}\Big\} \cdot e^{m[\lambda_{j+1}(\mu)-5\varepsilon]}.\]
It yields that
\[\frac{\|d_zf^m(v)\|}{\|v\|}\geq e^{m[\lambda_{j+1}(\mu)-6\varepsilon]}.\]

{\bf Case III:} If $m-k<K_\varepsilon L_0$ and $k\geq K_\varepsilon L_0$,
it is parallel to Case II. It follows from $(\ref{q})$, $(\ref{r})$, $(\ref{e})$ $(\ref{f})$, $(\ref{i})$ and $(\ref{j})$ that
\[  \frac{\|d_zf^m(v)\|}{\|v\|}\geq e^{m[\lambda_{j+1}(\mu)-6\varepsilon]}.\]

Finally, we prove the third statement of the corollary. For any $z\in\Lambda$, $0\neq v\in C_\theta^{G_{l-1}}(z)$ with $v=v^s+v^u$ with $v^s\in\widetilde{F}_{l-1}(z)$, $v^u\in\widetilde{G}_{l-1}(z)$ and $\|v^s\|\leq\kappa\theta\|v^u\|$.
If $m-k\geq K_\varepsilon L_0$ and $k\geq K_\varepsilon L_0$, by $(\ref{q})$, $(\ref{a})$, $(\ref{i})$ and
$(\ref{d})$, $(\ref{l})$, we have
\begin{equation*}
\begin{aligned}
\|d_zf^m(v^s)\|&\leq Q_2^2\cdot e^{(p_1+p_2)L_0[\lambda_{l-1}(\mu)+3\varepsilon]}\cdot \|v^s\|,\\
\|d_zf^m(v^u)\|&=\|d_{f^{m-k}z}f^k\circ d_zf^{m-k}(v^u)\|\\
&\leq \|d_{f^{m-k}z}f^k|_{\widetilde{G}_{l-1}(f^{m-k}z)}\| \cdot \|d_zf^{m-k}|_{\widetilde{G}_{l-1}(z)}\| \cdot \|v^u\|\\
&\leq e^{(p_1+p_2)L_0[\lambda_l(\mu)+3\varepsilon]} \cdot Q^2_1\cdot \|v^u\|.
\end{aligned}
\end{equation*}
Since $m\geq \max\{\frac{4\log Q_1}{\varepsilon}, \frac{4\log Q_2}{\varepsilon}, \frac{4(K_\varepsilon+1)L_0[\lambda_l(\mu)-3\varepsilon]}{\varepsilon}\}$, then we conclude
\begin{equation*}
\begin{aligned}
\|d_zf^m(v^s)\|&\leq e^{m[\lambda_l(\mu)+4\varepsilon]}\cdot \|v^s\|,\\
\|d_zf^m(v^u)\|&\leq e^{m[\lambda_l(\mu)+4\varepsilon]}\cdot \|v^u\|.
\end{aligned}
\end{equation*}
It yields that
\begin{equation*}
\begin{aligned}
\frac{\|d_zf^m(v)\|}{\|v\|} &\leq \frac{\|d_zf^m(v^u)\|+\|d_zf^m(v^s)\|}{\|v\|}\\
&\leq\frac{\|v^u\|}{\|v\|}\cdot \frac{\|d_zf^m(v^u)\|+\|d_zf^m(v^s)\|}{\|v^u\|}\\
&\leq \frac{1}{1-\kappa\theta} \frac{\|d_zf^m(v^u)\|+\|d_zf^m(v^s)\|}{\|v^u\|}\\
&\leq \frac{1}{1-\kappa\theta} (1+\kappa\theta)e^{m[\lambda_l(\mu)+4\varepsilon]}\\
&\leq e^{m[\lambda_l(\mu)+6\varepsilon]}
\end{aligned}
\end{equation*}
the last inequality is because that $m$ is large enough.
If $m-k\geq K_\varepsilon L_0$ and $k<K_\varepsilon L_0$, by
$(\ref{q})$, $(\ref{a})$, $(\ref{m})$ and
$(\ref{d})$, $(\ref{p})$, $(\ref{star})$, $m\geq N_5$,
then we conclude that
\begin{equation*}
\begin{aligned}
\|d_zf^m(v^s)\|&\leq Q Q_2^2\cdot e^{p_1L_0[\lambda_{l-1}(\mu)+3\varepsilon]+p_2L_0\varepsilon}\cdot \|v^s\|\\
&\leq e^{m[\lambda_l(\mu)+4\varepsilon]}\cdot \|v^s\|,\\
\|d_zf^m(v^u)\|&\leq Q_1\cdot e^{p_1L_0[\lambda_l(\mu)+3\varepsilon]} \cdot QQ_1 e^{p_2L_0\varepsilon} \cdot \|v^u\|\\
&\leq e^{m[\lambda_l(\mu)+4\varepsilon]}\cdot \|v^u\|.
\end{aligned}
\end{equation*}
Therefore
\[ \frac{\|d_zf^m(v)\|}{\|v\|}\leq e^{m[\lambda_l(\mu)+6\varepsilon]}. \]
If $m-k<K_\varepsilon L_0$ and $k\geq K_\varepsilon L_0$, then we can also prove
\[ \frac{\|d_zf^m(v)\|}{\|v\|}\leq e^{m[\lambda_l(\mu)+6\varepsilon]} \]
by the same way as in the case of $m-k\geq K_\varepsilon L_0$ and $k<K_\varepsilon L_0$. Thus we proved for any $z\in\Lambda$, $0\neq v\in C_\theta^{G_{l-1}}(z)$
\[ \|d_zf^m(v)\| \leq e^{m[\lambda_l(\mu)+6\varepsilon]} \|v\|. \]

By considering $f^{-1}$, we can similarly show that for any $n\in\mathbb{Z}$, $0\neq w\in C^{F_j}_{\theta}(f^{nm}z)$, $j=1,2,\cdots,l-1$,
\[ \|d_{f^{nm}z}f^{-m}(w)\|\geq e^{[-\lambda_{j}(\mu)-6\varepsilon]m}\|w\|,\]
and for any nonzero vector $w\in C^{F_1}_{\theta}(f^{nm}z)$,
\[ \|d_{f^{nm}z}f^{-m}(w)\|\leq e^{[-\lambda_{1}(\mu)+6\varepsilon]m}\|w\|.\]
Therefore we complete the proof of the corollary.
\end{proof}

In the paragraphs to follow, we will construct a dominated splitting corresponding to Oseledec subspace on $\Lambda$.
For any $z\in\Lambda$, let
\begin{equation*}
\begin{aligned}
\widetilde{E}_1(z) &= \widetilde{F}_1(z),\\
 \widetilde{E}_j(z) &= \widetilde{F}_j(z) \cap \widetilde{G}_{j-1}(z),\ j=2,3,\cdots,l-1,\\
 \widetilde{E}_{l}(z) &= \widetilde{G}_{l-1}(z).
 \end{aligned}
 \end{equation*}
By Lemma \ref{ds}, we conclude that the splitting
\[ T_\Lambda M=\widetilde{E}_1\oplus \widetilde{E}_2\oplus\cdots\oplus \widetilde{E}_l    \]
is dominated. Therefore the splitting
\[ T_z M=\widetilde{E}_1(z)\oplus \widetilde{E}_2(z)\oplus\cdots\oplus \widetilde{E}_l(z)    \]
varies continuously with the point $z\in\Lambda$.
By Corollary \ref{corofle} and (\ref{biaohao}), we have
\begin{equation}\label{3}
  e^{m[\lambda_j(\mu)-6\varepsilon]}\|w\| \leq \|d_zf^m(w)\| \leq e^{m[\lambda_j(\mu)+6\varepsilon]}\|w\|
\end{equation}
for any nonzero vector $w\in \widetilde{E}_j(z)$, $j=1,2,\cdots,l$ and $z\in\Lambda$.


(i)
By Lemma \ref{invariantofcone} and Theorem \ref{hyperbolicity},
$\Lambda^*$ is a hyperbolic set with respect to $f^m$. Since $f^m|_{\Lambda^*}$ is topologically conjugate to a full two-side shift in the symbolic space with $Card(\Delta_m\cap P)$ symbols,
it is topologically mixing with respect to $f^m$. We proved (i) of the main Theorem.

(ii)Since
$ h_{top}(f|_\Lambda)= \frac{1}{m}h_{top}(f^m|\Lambda^*)= \frac{1}{m}\log Card(\Delta_m\cap P)$,
by (\ref{uplowbounded}), it yields that
\[ h_{top}(f|_\Lambda)\geq-\frac{1}{m}\log t+\frac{N_5}{m}\cdot[h_\mu(f)-\varepsilon]. \]
Combining $N_5>\frac{4}{\varepsilon}\log t$ and $N_5\leq m< N_5(1+\varepsilon)$, we have
\begin{eqnarray*}
\begin{aligned}
h_{top}(f|_\Lambda)&\geq -\frac14\varepsilon+\frac{1}{1+\varepsilon}\cdot[h_\mu(f)-\varepsilon]\geq h_\mu(f)-[h_\mu(f)+2]\varepsilon.
\end{aligned}
\end{eqnarray*}
Applying (\ref{uplowbounded}), we conclude that
\[ h_{top}(f|_\Lambda)\leq \frac{N_5}{m}\cdot[h_\mu(f)+\varepsilon]\leq h_\mu(f)+\varepsilon. \]
This shows (ii) of the theorem.

(iii) Since $\rho<\frac{\varepsilon}{2}$, combining  Claim \ref{hyperinne}, we prove (iii) of the main theorem.

(iv)For any $f$-invariant probability measure $\nu$ supported on $\Lambda$, suppose $\nu$ is ergodic,
\begin{eqnarray*}
\begin{aligned}
d(\nu,\mu)&= \sum_{j=1}^\infty 2^{-j}\frac{1}{2\|\varphi_j\|_\infty}\cdot\Big|\int\varphi_jd\mu-\int\varphi_j d\nu\Big|\\
&= \sum_{j=1}^J 2^{-j}\frac{1}{2\|\varphi_j\|_\infty}\cdot\Big|\int\varphi_jd\mu-\int\varphi_j d\nu\Big|\\
&\ \ \ \ +\sum_{j=J+1}^\infty 2^{-j}\frac{1}{2\|\varphi_j\|_\infty}\cdot\Big|\int\varphi_jd\mu-\int\varphi_j d\nu\Big|\\
&\leq \sum_{j=1}^J 2^{-j}\frac{1}{2\|\varphi_j\|_\infty}\cdot\Big|\int\varphi_jd\mu-\int\varphi_j d\nu\Big|+\frac{1}{2^{J-1}}\\
&\leq \sum_{j=1}^J 2^{-j}\frac{1}{2\|\varphi_j\|_\infty}\cdot\Big|\int\varphi_jd\mu-\int\varphi_j d\nu\Big|+\frac{\varepsilon}{4}.
\end{aligned}
\end{eqnarray*}
We claim that $|\int\varphi_jd\mu-\int\varphi_jd\nu|\leq\frac{3\|\varphi_j\|_\infty}{4}\varepsilon \text{ for } j=1,2,\cdots,J.$
In fact, choosing $y\in\Lambda^*$ and $s\in\mathbb{N}$ large enough such that
$$\Big|\frac{1}{ms}\sum_{k=0}^{ms-1}\varphi_j(f^ky)-\int\varphi_jd\nu\Big|\leq\frac{\|\varphi_j\|_\infty}{4}\varepsilon$$
for $j=1,2,\cdots,J$. Then there exist $x_0,x_1,\cdots,x_{s-1}\in \Delta_m\cap P$ such that
$d(f^{km+t}y,f^tx_k)\leq\rho$
for $0\leq k\leq s-1$ and $0\leq t\leq m-1$. By (\ref{continuous}) and the construction of $\Lambda_{H,\delta,m}$,
\begin{eqnarray*}
\begin{aligned}
&\ \ \  \  \Big|\int\varphi_jd\mu-\int\varphi_jd\nu\Big|\\
&\leq \Big|\frac{1}{s}s\int\varphi_jd\mu-\frac{1}{s}\cdot\Big(\frac{1}{m}\sum_{k=0}^{m-1}\varphi_j(f^kx_0)+\frac{1}{m}\sum_{k=0}^{m-1}\varphi_j(f^kx_1)+\cdots+\frac{1}{m}\sum_{k=0}^{m-1}\varphi_j(f^kx_{s-1})\Big)\Big|\\
& +\Big|\frac{1}{ms}\Big(\sum_{k=0}^{m-1}\varphi_j(f^kx_0)+\cdots+\sum_{k=0}^{m-1}\varphi_j(f^kx_{s-1})\Big)-\frac{1}{ms}\sum_{k=0}^{ms-1}\varphi_j(f^ky)\Big|\\
& +\Big|\frac{1}{ms}\sum_{k=0}^{ms-1}\varphi_j(f^ky)-\int\varphi_jd\nu\Big|\\
&\leq \frac{\|\varphi_j\|_\infty}{4}\varepsilon+\frac{\|\varphi_j\|_\infty}{4}\varepsilon+\frac{\|\varphi_j\|_\infty}{4}\varepsilon=\frac{3\|\varphi_j\|_\infty}{4}\varepsilon.
\end{aligned}
\end{eqnarray*}
Therefore the proof of the claim is complete.
It implies that $d(\mu,\nu)\leq\frac{3}{4}\varepsilon+\frac{\varepsilon}{4}=\varepsilon$ for any ergodic measure $\nu$. If $\nu$ is not ergodic, the ergodic decompositional theorem tells us that $\nu$-almost every ergodic component is supported on $\Lambda$, thus
\begin{eqnarray*}
\begin{aligned}
d(\mu,\nu)&= \sum_{j=1}^\infty\frac{\Big|\int\varphi_jd\mu-\int\varphi_jd\nu\Big|}{2^j\cdot2\cdot\|\varphi_j\|_\infty}= \sum_{j=1}^\infty\frac{\Big|\int\big(\int\varphi_jd\mu-\int\varphi_jd\nu_x\big)d\nu(x)\Big|}{2^j\cdot2\cdot\|\varphi_j\|_\infty}\\
&\leq \int d(\mu,\nu_x)d\nu(x)\leq \varepsilon.
\end{aligned}
\end{eqnarray*}
This proves (iv) of the theorem.

(v)Since $\Lambda^*$ is a hyperbolic set with respective to $f^m$ and $\Lambda=\Lambda^*\cup f(\Lambda^*)\cup\cdots\cup f^{m-1}(\Lambda^*)$, combining with (\ref{3}), we obtain conclusion (v) of Theorem \ref{maintheorem}.
\end{proof}



\bibliographystyle{alpha}
\bibliography{bib}

\begin{thebibliography}{2}

\bibitem{ABC11} F. Abdenur, C. Bonatti and S. Crovisier, Nonuniform hyperbolicity for $C^1-$generic
diffeomorphisms, \emph{Israel Journal of Mathematics}, 183 (2011), 1-60.

\bibitem{ACW17} A. Avila, S. Crovisier, and A. Wilkinson, $C^1$ density of stable ergodicity, arXiv:1709.04983.


\bibitem{BDV} C. Bonatti, L. Diaz and M. Viana, Dynamics Beyond Uniform Hyperbolicity, A Global Geometric and Probabilistic Perspective, Springer-Verlag, Berlin and Heidelberg, 2005.


\bibitem{BP06} L. Barreira and Ya. Pesin, Smooth ergodic theory and nonuniformly hyperbolic dynamics, with appendix by O. Sarig, Handbook of Dynamical Systems 1B, Elsevier, 2006, 57-263.

\bibitem{BP07} L. Barreira and Y. Pesin, Nonuniform Hyperbolicity: Dynamics of Systems with Nonzero Lyapunov Exponents, Cambridge University Press, 2007.











\bibitem{cpz17} Y. Cao, Y. Pesin, and Y. Zhao, Dimension estimates of non-conformal repellers
and continuity of sub-additive topological pressure, 2017, preprint.

\bibitem{Chung}
Y. Chung, Shadowing properties of non-invertible maps with hyperbolic measures, \emph{Tokyo Journal of Mathematics}, 22 (1999), 145-166.

\bibitem{Gelfert10} K. Gelfert, Repellers for non-uniformly expanding maps with singular or critical
points, \emph{ Bulletin of the Brazilian Mathematical Society, New Series}, 41 (2010), 237-257,
2010.


\bibitem{Gelfert16} K. Gelfert, Horseshoes for Diffeomorphisms Preserving Hyperbolic Measures, \emph{Mathematische Zeitschrift}, 283(2016), 685-701.


\bibitem{Ka80} A. Katok, Lyapunov exponents, entropy and periodic orbits for diffeomorphisms, Publications Math$\acute{e}$matiques De Linstitut Des Hautes $\acute{E}$tudes Scientifiques, 51, 137-173, 1980.


\bibitem{KH95} A. Katok and B. Hasselblatt, Introduction to the Modern Theory of Dynamical Systems, Encyclopedia of Mathematics and Its Applications 54, Cambridge University
Press, 1995.



\bibitem{ly11} Z. Lian and L-S. Young, Lyapunov exponents, periodic orbits and horseshoes for mappings of Hilbert spaces, \emph{Annales Henri Poincar$\acute{e}$}, 12(2011), 1081-1108.

\bibitem{ly12} Z. Lian and L-S. Young, Lyapunov exponents, periodic orbits, and horseshoes for semiflows on Hilbert spaces, \emph{Journal of the American Mathematical Society}, 25(2012), 637-665.


\bibitem{Me88} L. Mendoza, Ergodic attractors for diffeomorphisms of surfaces, \emph{Journal of the London Mathematical Society}, 37(1988), 362-374.




\bibitem{Ose68} V. I. Oseledec, A multiplicative ergodic theorem: Lyapunov characteristic numbers for dynamical systems, \emph{Transactions of the Moscow Mathmatical Society}, 19(1968), 197-221.


\bibitem{Ru}
D. Ruelle, Thermodynamic formalism: the mathematical structures of
classical equilibrium statistical mechanics, Addison-Wesley Publishing Company, Advanced Book Program, 1978.









\bibitem{Yang}
Y. Yang, Horseshoes for $C^{1+\alpha}$ mappings with hyperbolic measures, \emph{Discrete and Continous Dynamical Systems}, 35(2015), 5133-5152.




\bibitem{zc17} R. Zou and Y. Cao, The approximation of Lyapunov exponents by horseshoes for Banach quasi-compact cocycles, 2017, preprint.










\end{thebibliography}

\end{document}